\documentclass[11pt,reqno]{amsart}

\usepackage[a4paper,left=35mm,right=35mm,top=30mm,bottom=30mm,marginpar=25mm]{geometry}

\usepackage{amsmath}
\usepackage{amssymb}
\usepackage{amsthm}
\usepackage[utf8]{inputenc}
\usepackage{eurosym}
\usepackage{graphicx}
\usepackage{hyperref}
\usepackage{dsfont}
\usepackage{dsfont}
\usepackage{xcolor,colortbl}
\usepackage{comment}

\usepackage{bm}
\usepackage{caption}
\usepackage{subcaption}
\usepackage[linesnumbered,ruled]{algorithm2e}

\allowdisplaybreaks

\usepackage{ifthen}

\makeindex

\numberwithin{equation}{section}

\newtheorem{theorem}{Theorem}
\numberwithin{theorem}{section}
\newtheorem{lemma}[theorem]{Lemma}

\newtheorem{proposition}[theorem]{Proposition}

\newtheorem{definition}[theorem]{Definition}
\newtheorem{hyp}{Assumption}
\newtheorem{problem}{Problem}

\newtheorem{remark}[theorem]{Remark}

\DeclareMathOperator{\Lip}{Lip}
\DeclareMathOperator{\arginf}{arginf}
\DeclareMathOperator{\supp}{supp}

\raggedbottom 

\author{Yuri Ashrafyan}\thanks{King Abdullah University of Science and Technology (KAUST), CEMSE Division, Thuwal 23955-6900.  Saudi Arabia. e-mail yuri.ashrafyan@kaust.edu.sa}
\author{Diogo Gomes}\thanks{King Abdullah University of Science and Technology (KAUST), CEMSE Division, Thuwal 23955-6900.  Saudi Arabia. 
e-mail: diogo.gomes@kaust.edu.sa}

\keywords{Mean Field Games; Price formation; Semi-Lagrangian scheme, Monotone operator}

\thanks{
      The authors were partially supported by King Abdullah University of Science and Technology (KAUST) baseline funds and KAUST OSR-CRG2021-4674.
}

\begin{document}

\title[A Fully-discrete Semi-Lagrangian scheme for a price formation MFG]{A Fully-discrete Semi-Lagrangian scheme \\ for a price formation MFG model}

\date{\today}

\begin{abstract}
Here, we examine a fully-discrete Semi-Lagrangian scheme for a 
mean-field game price formation model. 
We show the existence of the solution of the discretized problem and that it is monotone as a multivalued operator.
Moreover, we show that the limit of the discretization converges to the weak solution of the continuous price formation mean-field game using monotonicity methods. 
Numerical simulations demonstrate that this scheme can provide results efficiently, comparing favorably with other methods in the examples we tested.
\end{abstract}

\maketitle

\section{Introduction}

This paper investigates first-order price formation mean-field games (MFGs), modeling interactions within large populations of agents. 
Mean-field game theory, pioneered by \cite{ll1,ll2} and \cite{Caines1, Caines2}, extends classical optimal control problems to scenarios involving large populations of interacting agents.  The price formation problem draws on both optimal control principles \cite{gomes2018mean}
 and mean-field game techniques to model how individual decisions and market-wide dynamics influence each other.
Price formation mean-field games provide a framework for analyzing market behavior. By modeling the interactions of many agents, these systems may offer insights into resource allocation and market dynamics. Here, we develop a numerical method for approximating solutions of price formation mean-field games using Semi-Lagrangian (SL) schemes.

More precisely, we consider the following price formation problem.
\begin{problem}\label{pro1}
	Given initial distribution $\bar{m} \in \mathcal{P}(\mathbb{R})$,
	where $\mathcal{P}(\mathbb{R})$ is the space of probability measures on $\mathbb{R}$,
	terminal cost $\bar{u} \in C^2(\mathbb{R})$, and supply $Q \in C^2([0,T])$, and a uniformly convex Hamiltonian, $H \in C^1(\mathbb{R}^2)$.
	Find functions $u, m : \mathbb{R} \times [0,T] \rightarrow \mathbb{R}$ with $m \geq 0$, and $\varpi : [0,T] \rightarrow \mathbb{R}$, satisfying
	\begin{equation}\label{eq:MFG}
		\begin{cases}
			-u_t(x,t) + H(x, \varpi(t) + u_x(x,t) )=0, &\quad (x,t)\in \mathbb{R} \times [0,T]
			\\
			m_t(x,t) - \big(D_pH(x, \varpi(t) + u_x(x,t))m(x,t)\big)_x =0, &\quad (x,t)\in \mathbb{R} \times [0,T],
			\\
			-\int_{\mathbb{R}} D_pH(x, \varpi(t) + u_x(x,t))m(x,t) dx = Q(t), &\quad t\in [0,T],
		\end{cases}
	\end{equation}
	subject to initial-terminal conditions
	\begin{equation}\label{eq:initial_terminal}
		\begin{cases}
			u(x,T)=\bar{u}(x), \qquad x\in \mathbb{R},
			\\
			m(x,0)=\bar{m}(x), \qquad x\in \mathbb{R},
		\end{cases}
	\end{equation}
\end{problem}

In Problem~\ref{pro1}, $D_pH$ is a partial derivative of $H$ with respect to the second variable. 
The unknown is the triplet $(u, m, \varpi)$, where the value function, $u$, solves the first equation in the viscosity sense, and the probability distribution of the agents, $m$, solves the second equation in the distributional sense.  Finally, 
the price, $\varpi$, is determined by the balance condition, the third equation in \eqref{eq:MFG}, which guarantees that supply $Q(t)$ meets demand exactly.
A typical agent's state, represented by $x\in \mathbb{R}$, denotes their assets. 
At time $t$, the distribution of assets is encoded in the probability measure $m(\cdot, t)$.
Every agent trades assets at a price given by the time-dependent function $\varpi(t)$.
Section~\ref{sec:background} briefly recalls the preceding problem's derivation and motivation.

Under the assumptions discussed in Section~\ref{sec:Assumptions}, in \cite{gomes2018mean}, the authors proved the existence and uniqueness of a solution $(u,m,\varpi)$ to Problem~\ref{pro1}.   
This model assumes deterministic supply, a simplification that aids analysis but limits applicability in scenarios with uncertain supply.  Nonetheless, deterministic models offer a foundation for understanding more complex markets and can
 be directly applicable when accurate forecasts are available. 

The complexity of these models often renders analytical solutions infeasible.  Numerical methods allow for analyzing richer market dynamics, providing insights into previously unsolvable problems, and informing decision-making in practical applications.
A significant challenge is the forward-backward nature of traditional MFGs, compounded by the additional balance condition.
Researchers have suggested various numerical techniques for solving standard MFG systems; for example,
finite difference schemes and Newton-based approaches were presented in \cite{CDY} and \cite{DY}, respectively, for standard MFGs without balance conditions.
For the specific model considered here, in \cite{ashrafyan2022variational}, 
the authors introduced a potential function that transforms the price formation MFG into a convex variational problem. Then the problem was solved numerically by minimizing the variational problem discretized with finite differences. 

Price problems with a stochastic supply and price were introduced in \cite{Gomes2021price_noise} and \cite{gomes2023random_supply}.
Substantial effort was made to address the issue of common noise with machine learning (ML) techniques.
In \cite{Gomes:2023ML_price} proposes a ML approach employing dual recurrent neural networks and adversarial training to address the price formation MFG model with common noise.
An ML training process using a min-max characterization of control and price variables was developed in \cite{Gomes:2023AMO}.
Using the potential transformation, \cite{ashrafyan2022potential} solved the price problem with common noise using a recurrent neural network.  
These methods are suitable for common noise problems where standard numerical methods fail.
However, these methods can be slow for deterministic cases.  This was one of the original motivations for our work. 

Here, in the deterministic case, we 
develop efficient and accurate methods for deterministic price models.  We build on previous work on 
SL schemes for MFGs. 
Both semi-discrete and fully discrete SL schemes have been employed for first and second-order MFGs \cite{Camilli2012semi_discrete}, \cite{Carlini2014sl}, and \cite{Carlini:2013}.
In the Hamilton-Jacobi equation, we use a fully discrete SL scheme, and
for the transport equation, we use the adjoint of the linearization of the Hamilton-Jacobi equation.
For the balance condition, we use a simple numerical quadrature.
This discretization and the algorithm to solve it are detailed in Section~\ref{sec:Discretization}.
Properties and estimates related to the discretization are presented in Section~\ref{sec:prop_est}.
In Section~\ref{existence}, the existence of solutions to the discretized system is proved using Kakutani’s fixed point theorem. 
The concept of monotonicity for the discretized system is examined in Section \ref{monotonicity}. 
Moreover, it is shown that the limit of the discretization converges to a weak solution of Problem~\ref{pro1}.

Our approach addresses several gaps in existing numerical methods for price formation MFG: challenges with computational efficiency, accuracy in nonlinear scenarios, and suitability for large-scale validation of machine learning results. 
Our SL-based scheme offers advancements in all of these areas: it provides improved computational speed, robust handling of complex market dynamics, and the precision necessary for benchmarking ML models. 

These advantages are crucial for iterative analysis, policy testing, real-time applications, and the development of reliable, data-driven market insights.
A detailed comparison between our fully discrete SL, potential minimization, and ML methods appears in Section~\ref{sec:numerical_results}.

We gratefully acknowledge the referees' thorough reviews and constructive feedback, which substantially improved the quality and clarity of this manuscript.

\section{Background}\label{sec:background}

This section establishes the foundation for our price formation model.
  The model examines how prices form within a market where numerous agents interact, each trading motivated by profit maximization (here formulated
as cost-minimization).
Employing optimal control theory, we  analyze
how individual agents strategize to minimize costs associated
with asset holding, market impact, storage costs, and risk preferences.
Additionally, using mean-field game theory, we track the overall distribution of agents, impacting supply and demand.  These individual choices and their aggregate distribution, together with a market-clearing mechanism
 form the core mechanism that ultimately drives price dynamics within this model.

Agents have a state $X(t)\in \mathbb{R}$, corresponding to their assets. 
Let $\mathcal{A}_t$ be the set of bounded, measurable real-valued functions on 
the interval $[t,T]$.
An admissible control is a function $\alpha \in \mathcal{A}_t$.  Each agent
selects an admissible control that drives its trajectory
\begin{equation}\label{eq:dynamics}
	\begin{cases}
		dX(s)
		= \alpha(s) ds, \qquad s \in (t,T), \\
		X(t) = x,
	\end{cases}
\end{equation}
where $X(t)$ represents the agent's state at the time $t$.
The total cost for an agent is the sum of the terminal cost, $\bar{u}(x)$, and the integral of the running cost, 
\[
L(x,t,\alpha) = l_0(\alpha) + V(x) + \varpi(t)\alpha(t).
\]
Here $l_0$, the market impact term, 
represents costs like market impact or other trading expenses.
The preference potential, 
 $V(x)$, encodes the agent's asset level preference and can account for risk aversion, short-selling constraints, or storage costs.
 Lastly, $\varpi(t) \alpha(t)$ is the price agents pay by trading $\alpha$ units at a price $\varpi$. 
The agent aims to minimize the following cost functional:
\begin{equation}\label{eq:functional}
	J(x,t,\alpha) = \int_t^T L(X(s), s, \alpha(s))  ds +	\bar u(X(T)),
\end{equation}
by selecting appropriate admissible controls. 

The value function, $u(x,t)$,  is the infimum of the
 cost functional $J(x,t,\alpha)$
over all $\alpha \in \mathcal{A}_t$
\begin{equation}\label{eq:u_inf}
	u(x,t) := \inf_{\alpha \in \mathcal{A}_t} J(x,t,\alpha).
\end{equation}
The associated Hamiltonian, $H$,   is the Legendre-Fenchel transform of  $l_0 + V$, 
given by
\begin{equation}\label{eq:Legendre_transform}
	H(x,p)=\sup_{\alpha \in\mathbb{R}} \left\{- p \alpha - l_0(\alpha) - V(x) \right\}.
\end{equation}

Viscosity solutions offer a framework to handle potential non-differentiability that arises in such problems  \cite{BardiCapuzzo}.
In fact, $u$ is a viscosity solution to the Hamilton-Jacobi equation 
\[
\begin{cases}
	-u_t(x,t) + H(x, \varpi(t) + u_x(x,t))=0,
	\\
	u(x,T)=\bar{u}(x).
\end{cases}
\]

The dynamic programming principle for \eqref{eq:u_inf} provides 
a representation formula for the value function
\begin{equation}\label{eq:dpp}
	u(x,t-h) = \inf_{\alpha \in \mathcal{A}_t} \int_{t-h}^t  L(X(t-h), t-h, \alpha(s))  ds + u(X(t),t),
\end{equation}
where $X(t)$ solves \eqref{eq:dynamics}. Furthermore, the optimal control $\alpha^*$ is given in feedback form by
\begin{equation}\label{eq:feedback}
	\alpha^*(x,t) = -D_p H(x, \varpi(t) + u_x(x,t) ),
\end{equation} 
at all points where $u$ is differentiable.

The transport equation is the adjoint to the linearized Hamilton-Jacobi
equation.  This transport 
equation tracks the evolution of the distribution of the states over time
and incorporates the initial distribution, $\bar{m}(x)$, as an initial condition:
\[
\begin{cases}
	m_t(x,t) - \big(D_pH(x, \varpi(t) + u_x(x,t))m(x,t)\big)_x =0,
	\\
	m(x,0)=\bar{m}(x).
\end{cases}
\]

More precisely, given $(x, t) \in \mathbb{R}\times [0,T]$, consider the trajectory
\begin{equation*}
	\begin{cases}
		d\Phi_{x, t}(s) =  \alpha^*(\Phi_{x, t}(s),s)ds, \quad  \forall s\in(t,T) \\
		\Phi_{x, t}(t) = x.
	\end{cases}
\end{equation*}

Recall that push forward for a probability measure $\mu$ and a map $\psi$,  $\psi \# \mu$, is the probability measure defined by
\[
\int_\mathbb{R} \varphi d (\psi \# \mu) = \int_\mathbb{R} \varphi \circ \psi d \mu,
\]
for all bounded and continuous functions $\varphi$.

We obtain the density function $m$ from the initial measure $\bar{m}$ by pushing
forward with the flow $\Phi_{\cdot,0}(t)$:
\[
m(\cdot,t)= \Phi_{\cdot,0}(t) \#\bar{m}(\cdot).
\]
Note that for every $0 \leq t \leq t+h \leq T$, we have
\[
m(\cdot,t+h)= \Phi_{\cdot,t}(t+h)\# [\Phi_{\cdot,0}(t) \#\bar{m}(\cdot)] = \Phi_{\cdot,t}(t+h)\#  m(\cdot,t); 
\]
that is, 
\begin{equation}\label{eq:push_forward}
	\int_\mathbb{R} \varphi(x) m(x,t+h) dx = \int_\mathbb{R} \varphi (\Phi_{x,t}(t+h)) m(x,t) dx.
\end{equation}

Our model supposes a predefined supply function, denoted by $Q(t)$. 
The model imposes the equality between supply and demand:
\begin{equation}\label{eq:balance_alpha}
	\int_{\mathbb{R}} \alpha^*(x,t)m(x,t) dx = Q(t).
\end{equation}
This condition, expressed using the Hamiltonian, becomes
\begin{equation}\label{eq:balance_DpH}
	-\int_{\mathbb{R}} D_pH(x, \varpi(t) + u_x(x,t))m(x,t) dx = Q(t).
\end{equation}

\section{Main assumptions}\label{sec:Assumptions}

Throughout this paper, we work under the following assumptions as in \cite{gomes2018mean} and ensure the model is well-posed.

\begin{hyp}\label{hyp:H_l_0}
	Hamiltonian, $H$, is the Legendre-Fenchel transform of a uniformly convex Lagrangian, 
	\begin{equation}\label{eq:Legendre_transform}
		H(x,p)=\sup_{\alpha \in\mathbb{R}} \left\{- p \alpha - l_0(\alpha) - V(x) \right\},
	\end{equation}
	where  the market impact term, $l_0 \in C^2(\mathbb{R})$, is uniformly convex with a convexity constant $\kappa$; that is,  $l_0^{''}(\alpha)\geq \kappa >0$, and the preference potential $V$ is bounded from below.
\end{hyp}

The assumption that the market impact $l_0(\alpha)$ is independent of the agent's current state $x$ aligns well with price models, where these costs are typically distinct from preference costs $V(x)$. 
The convexity of $l_0$, which penalizes oscillating trading strategies, is fundamental to the stability and uniqueness of the MFG system's solutions.
Without it, the Hamiltonian becomes singular as in \eqref{eq:Legendre_transform} the supremum could be $+\infty$ if $p \neq 0$.

\begin{hyp}\label{hyp:H_bound}
	There exist constants $\theta>0$ and $C>0$, such that
	\[
	D^2_{pp} H(x,p) > \theta, \qquad |D^3_{ppp} H(x,p)| < C,
	\]
	for all $(x, p) \in \mathbb{R}^2$.
\end{hyp}

The uniform convexity of $l_0$ yields the strict convexity of $H$ with respect to the second variable and gives an upper bound for $D_{pp}^2 H$; that is, 
Assumption \ref{hyp:H_bound} implies $|D^2_{pp} H(x,p)| < C$.
The boundedness of the Hamiltonian's derivatives is essential for the existence of solutions.

\begin{hyp}\label{hyp:V_uT_m0_2nd_derivative}
	The potential $V$, the terminal cost $\bar{u}$, and the initial density $\bar{m}$ belong to $C^2(\mathbb{R})$.
	Furthermore, there exists a constant $C>0$ such that
	\[
	\|V\|_{C^2} \leq C, \quad \|\bar{u}\|_{C^2} \leq C, \quad |\bar{m}_{xx}| \leq C.
	\]
\end{hyp}

\begin{hyp}\label{hyp:V_uT_Lipschitz_convex}
	$V$ and $\bar u$ are globally Lipschitz and convex. 
\end{hyp}

The convexity of $V$ and $\bar u$ give regularity and uniqueness of solutions, see \cite{gomes2018mean}.

\begin{hyp}\label{hyp:Q}
	The supply function, $Q$, belongs to $C^1([0,T])$. 
\end{hyp}

\begin{hyp}\label{hyp:compactness}
	The initial distribution function, $\bar{m}$, has a mass of 1 and is compactly supported in $[-\bar{R}, \bar{R}] \subset \mathbb{R}$, for some positive number $\bar{R}\in \mathbb{R}$. 
\end{hyp}

The compactness assumption on initial distribution corresponds to the case when the agent's assets are bounded for the whole population, and ensures compact support of the density function $m(x,t)$ up to the terminal time $T$.

Under assumptions \ref{hyp:H_l_0}-\ref{hyp:compactness}, from \cite{gomes2018mean}, we have existence and uniqueness of solution $(u,m,\varpi)$, where $m$ is bounded, $u$ is Lipschitz, semiconcave, and differentiable in $x$.
Moreover, from \cite{ashrafyan2021duality}, $\varpi$ is Lipschitz continuous.

\section{Discretization}\label{sec:Discretization}


For hyperbolic conservation laws and Hamilton-Jacobi equations, SL schemes are known for their stability and ability to handle sharp gradients or nonsmooth solutions (see, e.g., \cite{BardiCapuzzo, FALCONE2002559, FalconeFerretti:1998, bonaventura_second_2021, calzola_semi-lagrangian_2023}).
In this section, we construct a discretized version of Problem~\ref{pro1}.  We use the dynamic programming principle and a fully discrete semi-Lagrangian scheme to discretize the value function $u$; for the density function m, we use trajectories of the dynamics \eqref{eq:dynamics}; finally, we discretize the balance condition to get an update rule for the price function $\varpi$.
The section concludes with an algorithm
to solve the discretized system.

To construct the discretization, we introduce the following notations and definitions.
For given positive numbers $\rho$ and $h$, define a space-time grid
\[
\mathcal{G}_{\rho,h}=\{(x_i, t_k)=(i \rho, k h) \, : \, i\in\mathbb{Z}, k=0,1,\ldots,N \},
\] 
such that $t_N = N h = T$, for a finite time $T$.
Consider the set of $\mathbb{P}_1$ basis functions $\{\beta_i\}_{i\in\mathbb{Z}}$, defined by 
\begin{equation}\label{eq:beta_i}
\beta_i(x) = \max\left\{1- \frac{|x-x_i|}{\rho}, \, 0\right\},
\end{equation}
which are compactly supported in $[x_i-\rho, x_i+\rho]$, $0\leq \beta_i(x)\leq1$ for all $x \in \mathbb{R}$, and $\beta_i(x_j)=\delta_{ij}$, where $\delta_{ij}$ is the Kronecker delta, and
\begin{equation}\label{eq:bi_sum}
    \sum_{i\in \mathbb{Z}} \beta_i(x) = 1,
\end{equation}
for every point  $x \in \mathbb{R}$. 
Given values $f(x_i)$ at the nodes $x_i$, we define the interpolation function as
\begin{equation}\label{eq:interpolation}
	I[f](x)=\sum_{i\in\mathbb{Z}} f(x_i) \beta_i(x).
\end{equation}
This interpolation function enables the Semi-Lagrangian discretization scheme to approximate values outside the grid $\mathcal{G}_{\rho,h}$.
A well-known estimate (see, e.g., \cite{Ciarlet, quartesaccosaleri07}) states  that for every bounded Lipschitz function $f$,  the following holds:
\begin{equation}\label{eq:Inter_Lipschitz}
    \sup_{x\in\mathbb{R}} \left| I[f](x) - f(x) \right| = O(\rho),
\end{equation}
and for $f \in C^2(\mathbb{R})$ with bounded first and second derivatives:
\begin{equation}\label{eq:Inter_smooth}
\sup_{x\in\mathbb{R}} \left| I[f](x) - f(x) \right| = O(\rho^2).
\end{equation}

We use the following notation. For a function $f(x,t)$ and indices $i\in\mathbb{Z}$, $k=0,1,\ldots,N$, $f_{i,k}$ denotes the value at node $(x_i,t_k)$ and $f_k$ denotes the vector ${f_{i,k}}_{i\in\mathbb{Z}}$. For a time-dependent function $g(t)$, $g_k$ denotes $g(t_k)$. For any function $f$, $f^q$ denotes its approximation at iteration $q$.


\subsection{Hamilton-Jacobi equation}

We approximate $u$ by a function $u^q$ by replacing the right-hand side of \eqref{eq:dpp} 
with the following Semi-Lagrangian discretization
\[
	u^q(x_i,t-h) = \inf_{\alpha \in\mathbb{R}} \left\{ I[u^q(\cdot, t)](y_i(\alpha)) + h L(x_i, t-h, \alpha)\right\} ,
\]
where $y_i(\alpha)=x_i + h \alpha$ is the discretization of $X(t)$.

For $i\in\mathbb{Z}$ and for $ k=N-1, N-2, \ldots, 0$ the discrete Hamilton-Jacobi equation is
\begin{equation}\label{eq:HJ_discretization}
	\begin{cases}
		\displaystyle
		u^q_{i,k} = \inf_{\alpha \in\mathbb{R}} \left\{ I[u_{k+1}^q](y_i(\alpha)) + h L_{i,k}(\alpha) \right\},
		\\
		u^q_{i,N} =
		\bar{u}_i.
	\end{cases}
\end{equation}

At the point $(x_i, t_k)$, let $\Lambda_{i,k}^q$ be the set of controls that minimize the discrete cost functional in \eqref{eq:HJ_discretization}:
\begin{equation}\label{eq:Lambda}
\Lambda_{i,k}^q = \left\{\alpha^* \in \mathbb{R} : \;  
\alpha^* =\arginf_{\alpha \in\mathbb{R}} \left\{ I[u_{k+1}^q](y_i(\alpha)) + h L_{i,k}(\alpha)
		\right\} \right\}.
\end{equation}
By Lemma~\ref{lem:unique_minimizer}, the set $ \Lambda_{i,k}^q $ is uniformly bounded. The set contains finitely many minimizers due to two properties: the coercivity and convexity of $L_{i,k}(\alpha)$ restrict minimizers to a bounded region, and the piecewise linear structure of $I[u_{k+1}](y(x_i, \alpha))$ which consists of finitely many linear segments with possible non-smoothness at grid points.
We select one minimizer $ \alpha^{*q}_{i,k}$ from $\Lambda_{i,k}^q$ as the result of a numerical minimization algorithm. An alternative would be to choose, for example, the leftmost element:
\begin{equation}\label{eq:alpha_star_i_k}
    \alpha^{*q}_{i,k} := \min \left\{ \Lambda_{i,k}^q \right\}.
\end{equation}
The minimizer is typically unique except under special conditions (eg symmetry). The general case of multiple minimizers is addressed in Section \ref{existence}, where we discuss the existence of solutions using probability measures on the set of minimizers, and in Section~\ref{monotonicity}, where we interpret the discretized system as a multivalued monotone operator.
The transport equation depends on both $u$ and the chosen $\alpha^*$.

\subsection{Continuity equation}

The discretization of the Hamilton-Jacobi equation offers a method to approximate the value function. 
Now, we consider the continuity equation, which complements this, approximating the time evolution of density functions.

Substituting $\varphi(x)$ by  $\beta_i(x)$
in equation \eqref{eq:push_forward} , we get
\[
\int_\mathbb{R} \beta_i(x) m(x,t+h) dx = \int_\mathbb{R} \beta_i (\Phi_{x,t}(t+h)) m(x,t) dx.
\]
Taking into account that  $\beta_i(x_j) = \delta_{ij}$, 
the preceding expression suggests 
the discretization formula for $m(x,t)$ 
\begin{equation}\label{eq:FP_discretization}
	\begin{cases}
		m^q_{i,k+1} = \sum_{j\in\mathbb{Z}} \beta_i(\Phi_{x_j,t_k}(t_{k+1})) m^q_{j,k}, \\
		m^q_{i,0} = \bar{m}_i,
	\end{cases}
\end{equation}
for $i\in\mathbb{Z}$ and $ k=1,2, \ldots, N$, where we use the discretized flow given by
\[
\Phi_{x_i,t_k}(t_{k+1}) = x_i + h \alpha^{*q}_{i,k}.
\]

\subsection{Balance condition}
Now, we address the discretization of the last term in \eqref{eq:MFG}, the balance condition:  
\[
-\int_{\mathbb{R}} D_pH(x, \varpi(t) + u_x(x,t))m(x,t) dx = Q(t).
\]
Notice that since Hamiltonian is separable, $D_p H$ is a function of $p$ alone, and from the feedback formula \eqref{eq:feedback}, we have
\[
u_x = D_pH^{-1}( -\alpha^*) - \varpi.
\]
We can substitute this representation into the balance condition and discretize it. This gives the following implicit update rule for the price function at iteration $q$:
\begin{equation}\label{eq:balance_discretization}
\sum_{i\in\mathbb{Z}} D_pH(x, \varpi^{q+1}_k - \varpi^q_k + D_pH^{-1}(- \alpha_{i,k}^{*q}) ) m^q_{i,k} \Delta x = -Q_k,
\end{equation}
for $k=0,1,\ldots,N-1$.

According to the prior discussion, the discretization 
of the MFG system \eqref{eq:MFG}-\eqref{eq:initial_terminal} 
is represented by the system of equations \eqref{eq:HJ_discretization}, \eqref{eq:FP_discretization}, and \eqref{eq:balance_discretization}. 
We employ an iterative algorithm outlined below to solve this discretized system efficiently.

\subsection{Algorithm}

\begin{enumerate}
	\item[0)]
	Initialize $q=0$, select an initial guess for the price, $\varpi^q$, and fix a tolerance parameter, $\varepsilon>0$. 

	\item[1)]  For $k=N-1, N-2, \ldots, 0$ and $i\in\mathbb{Z}$, compute the value function, $u$, using the discretization of the Hamilton-Jacobi equation:
	\begin{equation*}
		\begin{cases}
			\displaystyle u^q_{i,k} = \inf_{\alpha \in\mathbb{R}} \left\{  I[u^q_{k+1}](y_i(\alpha)) + h \, (l_0(\alpha)  + \varpi^q_k \alpha + V_i) \right\}, \\
			u^q_{i,N} = \bar{u}_i,
		\end{cases}
	\end{equation*}
    and find optimal controls:
	\[
	\alpha_{i,k}^{*q}= \arginf_{\alpha \in\mathbb{R}} \left\{ I[u^q_{k+1}](y_i(\alpha)) + h \, (l_0(\alpha)  + \varpi^q_k \alpha + V_i) \right\}.
	\]

	\item[2)] For $k=0,1,\ldots,N-1$ and $i\in\mathbb{Z}$, compute the distribution function, $m^q$, using the discretization of the Fokker-Planck equation:
	\begin{equation*}
		\begin{cases}
			\displaystyle m^q_{i,k+1}=\sum_{j\in\mathbb{Z}} \beta_i \left(y_j(\alpha_{j,k}^{*q}) \right) m^q_{j,k},\\
			m^q_{i,0} = \bar{m}_i.
		\end{cases}
	\end{equation*}

	\item[3)]  For $k=0,1,\ldots,N-1$, update the price, $q \to q+1$, from the following implicit equation
	\[
	\sum_{i\in\mathbb{Z}} D_pH(x_i, \varpi^{q+1}_k - \varpi^q_k + D_pH^{-1}(- \alpha_{i,k}^{*q}) ) m^q_{i,k} \Delta x = -Q_k.
	\]

	\item[4)] If
	\[
	\| \varpi^{q+1} - \varpi^q \|_{\infty} < \varepsilon,
	\]
      stop, otherwise, set $\varpi^q := \varpi^{q+1} $, and repeat steps $1)-4)$.
\end{enumerate} 
Note that the price function is defined only up to the $N-1$ time step, as there is no optimization at the terminal time.

To avoid notational clutter, the subsequent sections omit the explicit dependence on $q$ for symbols representing approximation functions when the context is clear.

\section{Some properties and estimates}\label{sec:prop_est}

This section establishes the theoretical foundation for our discretization scheme.  First, in Lemmas \ref{lem:disc_prop} and \ref{lem:disc_estimates}, we prove several properties crucial to its well-definedness, consistency, and computational behavior.  Then,
in Lemma \ref{lem:unique_minimizer}, we demonstrate the scheme's existence and boundedness of minimizers $\alpha^*$.  Finally, in Lemma \ref{lem:compactness}, we prove that compactly supported initial distributions remain compactly supported.

Let the right-hand side of the value function discretization in \eqref{eq:HJ_discretization} be denoted by
\[
S_{\rho, h}(f,i,k) := \inf_{\alpha
	\in\mathbb{R}} \left\{ I[f\lvert_{t=t_{k+1}}](y_i(\alpha)) + h L_{i,k}( \alpha) \right\}.
\]
  We examine the properties of this discretization in the subsequent lemmas, establishing conditions for the existence of a minimizer and providing explicit bounds on the minimizers.

\begin{lemma}\label{lem:disc_prop}	
Under Assumptions \ref{hyp:H_l_0} and \ref{hyp:V_uT_Lipschitz_convex}, 	$S_{\rho, h}(f,i,k)$ enjoys the following properties:
	\begin{enumerate}
		\item[i.] [Well-defined] 
		For $f$ bounded below function, there exists $\alpha \in \mathbb{R}$, at which the right-hand side attains its infimum.

		\item[ii.] [Monotone] For a fixed price $\varpi$, the discretization is monotone, i.e. if $u \leq v$, we have 
		\[
		S_{\rho, h}(u,i,k) \leq S_{\rho, h}(v,i,k).
		\]

		\item[iii.] [Consistent] Let $(\rho_n, h_n) \to 0$, $\rho_n^2/ h_n \to 0$, as $n \to \infty$ and grid points $(x^n_i,t^n_k)$ converge to $(x,t)$, then for every $f \in C^2(\mathbb{R} \times [0,T])$, 
		\[
		\lim_{n \to \infty} \frac{1}{h_n} \left[ f^n_{i,  k} - S_{\rho_n, h_n}(f,i, k) \right] = 
		-f_t(x,t) + H(x, \varpi + f_x).
		\]

		\item[iv.][Translation Invariance] For all $K \in \mathbb{R}$
		\[
		S_{\rho, h}(f+K,i,k) = S_{\rho, h}(f,i,k) + K.
		\]
	\end{enumerate}
\end{lemma}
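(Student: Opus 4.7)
My plan is to treat (i), (ii), and (iv) as short structural observations and focus effort on (iii). For \textbf{(i)}, the map $\alpha \mapsto I[f|_{t=t_{k+1}}](y_i(\alpha)) + h L_{i,k}(\alpha)$ is continuous in $\alpha$, and coercive: since $\beta_j \geq 0$ with $\sum_j \beta_j \equiv 1$, we have $I[f] \geq \inf f > -\infty$, while by Assumption \ref{hyp:H_l_0} the uniform convexity of $l_0$ forces $L_{i,k}(\alpha) \to +\infty$ as $|\alpha|\to\infty$, so a minimizer exists. For \textbf{(ii)}, the nonnegativity of $\beta_i$ yields $I[u] \leq I[v]$ pointwise whenever $u \leq v$ at the nodes; adding the common term $h L_{i,k}(\alpha)$ and taking infima preserves the inequality. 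For \textbf{(iv)}, the partition-of-unity identity \eqref{eq:bi_sum} gives $I[f+K] = I[f] + K$, and since $K$ does not depend on $\alpha$ it factors out of the infimum.

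For \textbf{(iii)}, I would combine the interpolation estimate \eqref{eq:Inter_smooth} with a two-term Taylor expansion. For $f \in C^2$ with bounded derivatives and $\alpha$ in a bounded set,
\[
I[f|_{t=t_{k+1}^n}](x_i^n + h_n\alpha) = f(x_i^n, t_k^n) + h_n\bigl[f_t(x_i^n,t_k^n) + \alpha f_x(x_i^n,t_k^n)\bigr] + O(h_n^2 + \rho_n^2 + h_n^2 \alpha^2).
\]
Substituting into the definition of $S_{\rho_n,h_n}$ and recalling $L_{i,k}(\alpha) = l_0(\alpha) + \varpi\alpha + V(x_i^n)$, the Legendre--Fenchel identity \eqref{eq:Legendre_transform} gives
\[
\inf_{\alpha\in\mathbb{R}}\bigl\{(\varpi + f_x)\alpha + l_0(\alpha) + V(x_i^n)\bigr\} = -H(x_i^n, \varpi + f_x),
\]
so, at least formally, $S_{\rho_n,h_n}(f,i,k) = f^n_{i,k} + h_n f_t(x_i^n,t_k^n) - h_n H(x_i^n, \varpi + f_x(x_i^n,t_k^n)) + o(h_n)$. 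Subtracting from $f^n_{i,k}$, dividing by $h_n$, and using $\rho_n^2/h_n \to 0$ together with $(x_i^n, t_k^n) \to (x,t)$ and continuity of $H$, $f_t$, $f_x$ yields the claim.

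The main obstacle is justifying that the minimization can be restricted to a compact $\alpha$-set uniformly in $n$, since the Taylor remainder $O(h_n^2 \alpha^2)$ is meaningful only for bounded $\alpha$. I plan to argue that the uniform convexity of $l_0$ dominates the linear-in-$\alpha$ perturbation (controlled by $|\varpi| + \|f_x\|_\infty$ on a neighborhood of $(x,t)$), while evaluating the objective at $\alpha = 0$ gives a uniform $O(1)$ upper bound on the infimum; together these confine all near-minimizers to a compact interval $[-A,A]$ independent of $n$. On $[-A,A]$ the error terms are uniformly $o(1)$, and a standard perturbation-of-infimum argument then delivers the required asymptotic expansion for $S_{\rho_n,h_n}$, completing the consistency proof.
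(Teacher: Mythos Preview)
Your proposal is correct and follows essentially the same approach as the paper: parts (i), (ii), (iv) are handled identically via coercivity of $L$, monotonicity of $I[\cdot]$, and the partition-of-unity identity, while for (iii) both you and the paper replace $I[f^n_{k+1}](x_i^n+h_n\alpha)$ by $f(x_i^n+h_n\alpha,t^n_{k+1})+O(\rho_n^2)$, form the difference quotient, and invoke the Legendre--Fenchel identity. Your treatment of the compactness restriction on $\alpha$ is actually more explicit than the paper's, which simply remarks that coercivity of $L$ allows restricting $\alpha$ to a compact set before swapping limit and infimum.
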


\begin{proof}
	\begin{enumerate}
		\item[i.] 		
        From the definition of linear interpolation \eqref{eq:interpolation}, and property of basis functions $\beta_i$, it follows
        \[
            I[f\lvert_{t=t_{k+1}}](y_i(\alpha_n)) \geq \inf \{f_{k+1}\}.
        \]
  Let $\{\alpha_n^*\}$ be a minimizing sequence. 
        By the coercivity of $L$, we can restrict $\alpha_n^*$ to a compact set, then
		\[
		S_{\rho, h}(f,i,k) \geq \inf \{f_{k+1}\} +  h L_{i,k}( \alpha_n^*) \geq C,
		\]
		since $L$ is uniformly convex in $\alpha$, by Assumption \ref{hyp:H_l_0}. 
		\item[ii.] 
		Let $\alpha^*_{i,k}$ and $\tilde{\alpha}^*_{i,k}$ be minimizers for $S_{\rho, h}(u,i,k)$ and $S_{\rho, h}(v,i,k)$, respectively.
Because $u \leq v$, then 
        \[
        I[u_{k+1}](y_i(\tilde{\alpha}^*_{i,k})) \leq I[v_{k+1}](y_i(\tilde{\alpha}^*_{i,k})),
        \]
        and we get
		\begin{align*}
			S_{\rho, h}(u,i,k) & \leq I[u_{k+1}](y_i(\tilde{\alpha}^*_{i,k})) + h L_{i,k}( \tilde{\alpha}^*_{i,k})  \\
			& \leq I[v_{k+1}](y_i(\tilde{\alpha}^*_{i,k})) + h L_{i,k}( \tilde{\alpha}^*_{i,k})  = S_{\rho, h}(v,i,k).
		\end{align*}
		\item[iii.] 
		Recall that $H$ is the Legendre-Fenchel transform of $l_0 + V$ given by \eqref{eq:Legendre_transform},
		\begin{align*}
			& \lim_{n \to \infty} \frac{1}{h_n} \left[ f^n_{i,  k} - S_{\rho_n, h_n}(f,i, k) \right] \\
			= & \lim_{n \to \infty} - \inf_{\alpha
				\in\mathbb{R}} \left\{ \frac{I[f\lvert_{t=t^n_{k+1}}](x^n_i + h_n \alpha) -  f^n_{i,  k} }{h_n} +  L(x^n_i, t^n_k, \alpha) \right\} \\
            = & \lim_{n \to \infty} - \inf_{\alpha
			\in\mathbb{R}} \left\{ \frac{f(x^n_i+h_n \alpha, t^n_{k+1}) -  f^n_{i,  k} }{h_n} + O\left(\frac{\rho^2_n}{h_n} \right) + L(x^n_i, t^n_k, \alpha) \right\} \\
			= &  - \inf_{\alpha
				\in\mathbb{R}} \left\{ f_t(x,t) + f_x(x,t) \alpha +  L(x, t, \alpha) \right\} \\
			= & - f_t(x,t) + \sup_{\alpha
				\in\mathbb{R}} \left\{ - \alpha (f_x(x,t) + \varpi(t)) -  l_0(\alpha) - V(x) \right\} \\
			= & - f_t(x,t) + H(x, \varpi(t) + f_x(x,t)).
		\end{align*}
        On the third line, we used the following approximation for the $C^2$ function $f$, from the estimate of the interpolation function \eqref{eq:Inter_smooth}.
    \[
    I[f^n_{k+1}]\left( x^n_i + h_n \alpha \right) = f(x^n_i+h_n \alpha, t^n_{k+1}) + O\left(\rho^2_n \right).
    \]
Note that in the above identities, we can switch the limit with the infimum because
 $f \in C^2(\mathbb{R} \times [0,T])$ and by the coercivity of $L$, we can restrict $\alpha$
 to a compact set. 

		\item[iv.] $f\mapsto I[f]$ is linear and for any constant $K \in \mathbb{R}$, $I[K]=K$.\qedhere
	\end{enumerate}
\end{proof}

The next lemma shows that the distribution function is a probability measure and gives uniform estimates on value function and price, which are only dependent on the data of the problem but not on a grid.
\begin{lemma}\label{lem:disc_estimates}
	Suppose Assumptions \ref{hyp:H_l_0}--\ref{hyp:compactness} hold. 
	Let $(m, u, \varpi)$  be the solution to the discretized system \eqref{eq:HJ_discretization}, \eqref{eq:FP_discretization}, and \eqref{eq:balance_discretization}.
	Then the triplet $(m, u, \varpi)$ satisfies the following properties:
	\begin{enumerate}
		\item[i.] $\sum_{i\in\mathbb{Z}} m_{i,k} \Delta x = 1$, for all $k=0,\ldots, N$, and $m_{i,k} \geq 0$, for all $i, k$,

		\

		\item[ii.] { $|u_{k}| \leq C$ uniformly over compact sets, and $u_k$ are globally Lipschitz, with $\Lip(u_k) \leq  \Lip(\bar{u}) + T \Lip\left( V \right)$, for all $k=0,\ldots, N$,}

		\

		\item[iii.] $\| \varpi \|_{\infty} \leq C$,
	\end{enumerate}
\end{lemma}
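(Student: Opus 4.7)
The plan is to prove the three assertions in a specific order: part (i), then the Lipschitz half of (ii), then (iii), and finally the boundedness half of (ii). This order is forced by the couplings: the Lipschitz estimate must be proved in a $\varpi$-independent form so that it can feed into a bound on $\varpi$ through the balance condition, and only once $\varpi$ is controlled can one bound the optimal controls and hence $u$ itself.

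Part (i) is a direct induction on $k$. Positivity is preserved by the update \eqref{eq:FP_discretization} because $\beta_i\geq 0$; mass conservation follows by swapping the two sums and applying the partition-of-unity identity \eqref{eq:bi_sum} at each point $\Phi_{x_j,t_k}(t_{k+1})$. For the Lipschitz half of (ii), I argue by backward induction on $k$ starting from $u_N=\bar u$. With $\Lip_d(u_k):=\sup_{i\neq j}|u_{i,k}-u_{j,k}|/|x_i-x_j|$, which coincides with the Lipschitz constant of $I[u_k]$, the standard comparison trick---testing a minimizer at $x_j$ as a candidate at $x_i$---combined with the identity $y_i(\alpha)-y_j(\alpha)=x_i-x_j$ yields
\[
\Lip_d(u_k)\leq \Lip_d(u_{k+1})+h\Lip(V).
\]
The price term $h\varpi_k\alpha$ is constant in $x$ and cancels, which is the key to $\varpi$-independence. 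Iterating at most $N$ steps with $Nh=T$ gives $\Lip_d(u_k)\leq \Lip(\bar u)+T\Lip(V)$.

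The main obstacle is (iii). At a solution of the fixed-point system, the Legendre identity $D_pH(x,D_pH^{-1}(-\alpha))=-\alpha$ collapses \eqref{eq:balance_discretization} to the clean discrete balance
\[
\sum_i \alpha^*_{i,k}\, m_{i,k}\,\Delta x=Q_k.
\]
The first-order (or subdifferential) condition for the minimization defining $\alpha^*_{i,k}$ reads $l_0'(\alpha^*_{i,k})=-\varpi_k-p_{i,k}$ with $|p_{i,k}|\leq \Lip_d(u_{k+1})\leq \Lip(\bar u)+T\Lip(V)$. Uniform convexity of $l_0$ (Assumption \ref{hyp:H_l_0}) makes $(l_0')^{-1}$ Lipschitz with constant $1/\kappa$, so $\alpha^*_{i,k}$ lies within a uniformly bounded distance of the $i$-independent quantity $\alpha^0_k:=(l_0')^{-1}(-\varpi_k)$. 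Averaging against $m_{i,k}$, which is a probability measure by (i), and invoking $\|Q\|_\infty<\infty$ from Assumption \ref{hyp:Q}, the balance pins $|\alpha^0_k|$ inside a bounded interval; continuity of $l_0'$ on bounded sets then gives $|\varpi_k|=|l_0'(\alpha^0_k)|\leq C$.

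With $\varpi$ under control, the boundedness half of (ii) is routine. The previous display also yields a uniform bound on $|\alpha^*_{i,k}|$. Testing \eqref{eq:HJ_discretization} with $\alpha=0$ and iterating backward gives an upper bound $u_{i,k}\leq \bar u_i+T(l_0(0)+\sup_K V)$ on any compact set $K$. For the lower bound, inserting the actual minimizer $\alpha^*_{i,k}$---now uniformly bounded so that $y_i(\alpha^*)$ remains in a slightly enlarged compact set---and using the inductive bound on $I[u_{k+1}]$ there together with the uniform bounds on $\varpi_k$, $l_0$, and $V$ completes the estimate.
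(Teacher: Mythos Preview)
Your proof is correct and follows essentially the same approach as the paper: the same induction for (i), the same comparison/competitor trick for the Lipschitz estimate (with the crucial observation that the $\varpi$-term cancels), and the same mechanism for (iii)---your optimality condition $l_0'(\alpha^*)=-\varpi_k-p_{i,k}$ averaged against $m_{i,k}$ is the Lagrangian dual of the paper's monotonicity argument for $J(\varpi)=\sum_i D_pH(\varpi+U_i)m_{i,k}\Delta x$. Your explicit reordering (Lipschitz of (ii) $\Rightarrow$ (iii) $\Rightarrow$ boundedness of (ii)) is a clean way to avoid the slight circularity in the paper's presentation, where the lower bound on $u$ is asserted before $\varpi$ is controlled.
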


\begin{proof}
	\begin{enumerate}
		\item[i.] 
        The positivity of $m_{i,k}$ follows directly from the 
         definition \eqref{eq:FP_discretization}, the positivity of $\bar{m}$ and $\beta_i$ functions.		
        And conservation of the mass follows from discretization \eqref{eq:FP_discretization} and property of $\beta_i$ functions \eqref{eq:bi_sum}:
		\begin{align*}
			\sum_{i\in\mathbb{Z}} m_{i,k+1} \Delta x &= \sum_{i\in\mathbb{Z}} \sum_{j\in\mathbb{Z}} \beta_i \left( \Phi_{x_j,t_k}(t_{k+1}) \right) m_{j,k}  \Delta x \\
			&= \sum_{j\in\mathbb{Z}} m_{j,k}  \Delta x \sum_{i\in\mathbb{Z}} \beta_i \left( \Phi_{x_j,t_k}(t_{k+1}) \right) = \sum_{j\in\mathbb{Z}} m_{j,k}  \Delta x.
		\end{align*}
		To finish the proof, note that for $k=0$
		\[
		\sum_{j\in\mathbb{Z}} m_{j,0}  \Delta x = \sum_{j\in\mathbb{Z}} \bar{m}_{j}  \Delta x = 1,
		\]
    by Assumption \ref{hyp:compactness}.
      
		\

		\item[ii.] From the discretization for the value function \eqref{eq:HJ_discretization}, we can get an upper bound by taking $\alpha^* = 0$:
		\[
		\begin{aligned}
			u_{i, N-1} & \leq \bar{u}_i + h l_0(0) + h V_i, \\ 
			u_{i, N-2} & \leq u_{i,N-1} + h l_0(0) + h V_i \\
			& \leq \bar{u}_i +  2h l_0(0) + 2h V_i, \\ 
			& \ldots \\
			u_{i, 0} & \leq \bar{u}_i + T l_0(0) + T V_i.
		\end{aligned}
		\]
  
        Similarly, the lower bound follows from $(i.)$ of Lemma~\ref{lem:disc_prop} and Assumption~\ref{hyp:V_uT_Lipschitz_convex}. 
        
		The Lipschitz estimate is obtained by using the discretization for the value function \eqref{eq:HJ_discretization}, and Assumption \ref{hyp:V_uT_Lipschitz_convex}.
		At the point $x_j$, we have
		\begin{align*}
			u_{j,N-1} \leq & I[ \bar{u}]\left( y_j(\alpha_{i,N-1}^*) \right) + h \, \varpi_{N-1} \alpha_{i,N-1}^* + h l_0(\alpha_{i,N-1}^*) + h V_j \\
			& + I[ \bar{u}]\left( y_i( \alpha_{i,N-1}^*) \right) - I[ \bar{u}]\left( y_i( \alpha_{i,N-1}^*) \right) + h V_i - h V_i  \\
			\leq & u_{i,N-1}  + \Lip \left( \bar{u} \right) \left| x_i - x_{j} \right| + h \, \Lip \left( V \right) \left| x_i - x_{j} \right|.
		\end{align*}
		Because the preceding estimate is symmetric with respect to $i$ and $j$, we get
		\[
		\Lip\left( u_{N-1} \right) \leq \Lip\left( \bar{u} \right) + h \, \Lip\left( V \right).
		\]
		Applying iteratively the same procedure for $k=2, 3, \ldots, N$, yields:
		\begin{align*}
			\Lip\left( u_{N-k} \right) 
			& \leq \Lip\left( u_{N-k+1} \right) + h \, \Lip\left( V \right) 
			\leq \Lip\left( \bar{u} \right) + k h \, \Lip\left( V \right) \\
			&\leq \Lip\left( \bar{u} \right) + T \, \Lip\left( V \right).
		\end{align*}

		\item[iii.] This estimate follows from combining the results of $i.$ and $ii.$, and the discretization for   the balance condition in \eqref{eq:MFG}. 

  Define the functional
  \[
   J(\varpi) := \sum_{i\in\mathbb{Z}} D_pH(\varpi + U_i) m_{i,k} \Delta x,
  \]
  with $U_i$ bounded.
  Then by Assumption \ref{hyp:H_bound}, it follows that
    \[
   J'(\varpi) > \theta > 0.
  \]
  This implies $J(\varpi)$ is a monotone strictly increasing function. We replace the  balance condition in \eqref{eq:MFG}
 by 
  \[
  J(\varpi_k) = -Q_k.
  \]
  If $Q$ bounded, we conclude that $\| \varpi \|_{\infty} \leq C$.
 \qedhere
	\end{enumerate}
\end{proof}
Next, we prove the boundedness of the minimizer in 
 \eqref{eq:HJ_discretization}.
\begin{lemma}\label{lem:unique_minimizer}
Suppose 
Assumptions \ref{hyp:H_l_0}--\ref{hyp:compactness} hold.
Then, there exists a positive constant $C$, such that 
			\[
			|\alpha^*_{i,k}|< C
			\]
			for all $i,k$.
\end{lemma}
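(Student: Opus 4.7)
The plan is to obtain the bound by comparing the optimal value achieved at a minimizer $\alpha^*_{i,k}$ with the value obtained by choosing the suboptimal control $\alpha = 0$, and then exploiting the uniform convexity of $l_0$ together with the uniform Lipschitz control on $u_k$ and on $\varpi$ established in Lemma~\ref{lem:disc_estimates}.

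\textbf{Step 1: Suboptimality inequality.} By definition of the infimum in \eqref{eq:HJ_discretization}, taking $\alpha = 0$ as a test value gives
\[
I[u_{k+1}](x_i + h\alpha^*_{i,k}) + h\bigl(l_0(\alpha^*_{i,k}) + \varpi_k \alpha^*_{i,k} + V_i\bigr) \leq I[u_{k+1}](x_i) + h\bigl(l_0(0) + V_i\bigr).
\]
Cancelling $hV_i$ and rearranging, this becomes
\[
h\,l_0(\alpha^*_{i,k}) + h\varpi_k \alpha^*_{i,k} \leq h\,l_0(0) + I[u_{k+1}](x_i) - I[u_{k+1}](x_i + h\alpha^*_{i,k}).
\]

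\textbf{Step 2: Bound the interpolation difference.} The interpolation $I[\cdot]$ preserves the Lipschitz constant of $u_{k+1}$ (since the $\mathbb{P}_1$ basis functions $\beta_i$ form a partition of unity, so $I[u_{k+1}]$ is the piecewise linear interpolant with the same Lipschitz constant). Hence by Lemma~\ref{lem:disc_estimates}(ii),
\[
|I[u_{k+1}](x_i) - I[u_{k+1}](x_i + h\alpha^*_{i,k})| \leq \bigl(\Lip(\bar u) + T\Lip(V)\bigr) h |\alpha^*_{i,k}|.
\]
Dividing by $h$ we obtain
\[
l_0(\alpha^*_{i,k}) - l_0(0) \leq \bigl(-\varpi_k + \Lip(\bar u) + T\Lip(V)\bigr)|\alpha^*_{i,k}| \operatorname{sgn}(\alpha^*_{i,k}),
\]
and in absolute value, using Lemma~\ref{lem:disc_estimates}(iii),
\[
l_0(\alpha^*_{i,k}) - l_0(0) \leq \bigl(\|\varpi\|_\infty + \Lip(\bar u) + T\Lip(V)\bigr)|\alpha^*_{i,k}| =: K|\alpha^*_{i,k}|,
\]
where $K$ is a constant depending only on the data of the problem.

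\textbf{Step 3: Use uniform convexity of $l_0$.} By Assumption~\ref{hyp:H_l_0}, $l_0''\geq \kappa>0$, so a second-order Taylor expansion gives
\[
l_0(\alpha^*_{i,k}) - l_0(0) \geq l_0'(0)\alpha^*_{i,k} + \frac{\kappa}{2}(\alpha^*_{i,k})^2.
\]
Combining with Step~2 yields
\[
\frac{\kappa}{2}(\alpha^*_{i,k})^2 \leq \bigl(K + |l_0'(0)|\bigr)|\alpha^*_{i,k}|,
\]
so
\[
|\alpha^*_{i,k}| \leq \frac{2}{\kappa}\bigl(K + |l_0'(0)|\bigr) =: C,
\]
a constant independent of $i$, $k$, and the grid parameters $\rho, h$.

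The only delicate point is making sure that the Lipschitz constant of $I[u_{k+1}]$ is truly bounded independently of the grid; this follows because $I$ applied to values of a Lipschitz function produces a piecewise linear interpolant whose slope on each cell is a difference quotient of $u_{k+1}$ at two nearby grid points, hence bounded by $\Lip(u_{k+1})$. All other ingredients are direct consequences of Lemma~\ref{lem:disc_estimates} and Assumption~\ref{hyp:H_l_0}, so no further obstacle remains.
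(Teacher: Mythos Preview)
Your proof is correct and follows essentially the same approach as the paper: compare the value at a minimizer with the value at $\alpha=0$, bound the interpolation difference via the uniform Lipschitz estimate on $u_{k+1}$ from Lemma~\ref{lem:disc_estimates}(ii), and then use the growth of $l_0$ to conclude. The only cosmetic difference is that the paper finishes by invoking coercivity of $L$ (i.e., $L_{i,k}(\alpha)/|\alpha|\to\infty$) qualitatively, whereas you invoke the uniform convexity $l_0''\geq\kappa$ quantitatively to extract an explicit bound $|\alpha^*_{i,k}|\leq \tfrac{2}{\kappa}(K+|l_0'(0)|)$; both amount to the same thing under Assumption~\ref{hyp:H_l_0}. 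One small remark: the intermediate line in Step~2 involving $\operatorname{sgn}(\alpha^*_{i,k})$ is written a bit loosely (the Lipschitz term does not carry the sign), but the final inequality $l_0(\alpha^*_{i,k})-l_0(0)\leq K|\alpha^*_{i,k}|$ is correct and is all that is needed.
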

\begin{proof}
        Define a function
		\[
		g(\alpha) := I[u_{k+1}](y_i(\alpha)) + h L_{i,k}( \alpha).
		\]
		Because $\alpha^*_{i,k}$ is a minimizer of $g(\alpha)$, we have
  		\begin{align*}
		0 & \leq  g(0) - g(\alpha^*_{i,k})  =  u_{i,k+1} - I[u_{k+1}](x_i + h \alpha^*_{i,k}) + h L_{i,k}(0) - h L_{i,k}(\alpha^*_{i,k}) \\
& \leq C h \, |\alpha^*_{i,k}| + h L_{i,k}(0) - h L_{i,k}(\alpha^*_{i,k}),
		\end{align*}
where we used the Lipschitz continuity of $u_{k+1}$, by Lemma~\ref{lem:disc_estimates}. 
The preceding inequality gives the estimate:
  \[
 \frac{ L_{i,k}(\alpha^*_{i,k}) }{|\alpha^*_{i,k}|} \leq C + \frac{ L_{i,k}(0) } {|\alpha^*_{i,k}|}.
  \]
 Because $L$ is coercive, we have 
  \[
 \lim_{|\alpha| \rightarrow \infty} \frac{ L_{i,k}(\alpha) }{|\alpha|} =+\infty.
  \]
  This implies that $\alpha^*_{i,k}$ is bounded. 
\end{proof}

With the established bounds on the minimizer, $\alpha^*$ and Lipschitz continuity of $u_k$ with respect to space discretization, we are ready to prove the Lipschitz continuity of the value function in time, uniformly in $h$.
\begin{lemma}\label{lem:time_lipschitz}
	Suppose Assumptions \ref{hyp:H_l_0}--\ref{hyp:V_uT_Lipschitz_convex} hold. 
 Then there exists a positive constants $C$, such that
    \[
    |u_{i,k} - u_{i,l}| \leq C |t_k - t_l|,
    \]
{ uniformly in $h$,  over compact sets. }
\end{lemma}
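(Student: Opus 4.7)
The plan is to first establish a one-step-in-time Lipschitz bound, $|u_{i,k}-u_{i,k+1}|\le C h$, and then iterate. Both bounds will use only the discretized Hamilton-Jacobi equation \eqref{eq:HJ_discretization}, together with the uniform spatial Lipschitz estimate from Lemma~\ref{lem:disc_estimates}(ii) and the uniform boundedness of minimizers from Lemma~\ref{lem:unique_minimizer}.

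For the upper bound on $u_{i,k}-u_{i,k+1}$, I would simply test the infimum in \eqref{eq:HJ_discretization} with the suboptimal choice $\alpha=0$. Since $I[u_{k+1}](x_i)=u_{i,k+1}$, this gives
\[
u_{i,k} \le u_{i,k+1} + h\bigl(l_0(0) + V_i + \varpi_k\cdot 0\bigr) = u_{i,k+1} + h\bigl(l_0(0)+V_i\bigr).
\]
Because $V$ is globally Lipschitz (Assumption~\ref{hyp:V_uT_Lipschitz_convex}), on any fixed compact set the term $V_i$ is bounded by a constant independent of the discretization, so this yields $u_{i,k}-u_{i,k+1}\le Ch$ uniformly in $h$.

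For the lower bound, I would use an actual minimizer $\alpha^{*}_{i,k}$, which is uniformly bounded by some constant $R$ thanks to Lemma~\ref{lem:unique_minimizer}. Writing $y_i(\alpha^*_{i,k})=x_i + h\alpha^*_{i,k}$ and applying the spatial Lipschitz estimate $\Lip(u_{k+1})\le \Lip(\bar u)+T\Lip(V)=:L$ from Lemma~\ref{lem:disc_estimates}(ii) to the interpolant (which inherits the same Lipschitz constant from its nodal values), I get
\[
u_{i,k} = I[u_{k+1}]\bigl(x_i+h\alpha^*_{i,k}\bigr) + h L_{i,k}(\alpha^*_{i,k}) \ge u_{i,k+1} - Lh|\alpha^*_{i,k}| + h L_{i,k}(\alpha^*_{i,k}).
\]
Since $|\alpha^*_{i,k}|\le R$ uniformly and $L_{i,k}(\alpha)=l_0(\alpha)+V_i+\varpi_k\alpha$ is bounded on $[-R,R]$ (using $\|\varpi\|_\infty\le C$ from Lemma~\ref{lem:disc_estimates}(iii) and the boundedness of $V_i$ on the compact set), this gives $u_{i,k+1}-u_{i,k}\le Ch$ for a constant independent of $h$.

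Combining the two one-step estimates yields $|u_{i,k}-u_{i,k+1}|\le Ch$, and then a telescoping sum over intermediate time steps gives
\[
|u_{i,k}-u_{i,l}| \le \sum_{j=\min(k,l)}^{\max(k,l)-1} |u_{i,j+1}-u_{i,j}| \le C\,h\,|k-l| = C|t_k-t_l|,
\]
as desired. The main (minor) subtlety I would watch out for is ensuring that every constant appearing in the argument — the bound on $\alpha^*$, the bound on $L_{i,k}(\alpha^*)$, the bound on $V_i$, and the spatial Lipschitz constant of the interpolant $I[u_{k+1}]$ — is uniform in $h$ and in the grid, which is exactly what the preceding lemmas were designed to guarantee; the restriction to compact sets is needed only because $V$ is merely globally Lipschitz and so $V_i$ is not globally bounded.
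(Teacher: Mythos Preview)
Your proposal is correct and follows essentially the same approach as the paper's own proof: test with $\alpha=0$ for the upper one-step bound, use the actual minimizer together with the spatial Lipschitz estimate and the uniform bound on $\alpha^*$ for the lower one-step bound, and then telescope. The only cosmetic difference is that the paper bounds $hL_{i,k}(\alpha^*_{i,k})\ge hC$ first and then applies the Lipschitz estimate to $u_{i,k+1}-I[u_{k+1}](x_i+h\alpha^*_{i,k})$, whereas you combine these in a single line; the logic is identical.
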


\begin{proof}
   In the definition of the value function discretization \eqref{eq:HJ_discretization}, taking the minimizer $\alpha^*_{i,k} = 0$, we obtain the following inequality:
     \[
		\begin{aligned}
			u_{i,k} &\leq  u_{i,k+1} +  h (l_0(0) + V_i) \\
            & =  u_{i,k+1} + ((k+1)h - k h) (l_0(0) + V_i) \\
            & =  u_{i,k+1} + (t_{k+1} - t_k) (l_0(0) + V_i), 
		\end{aligned}
		\]
 which gives us an estimate:
  \begin{equation}\label{eq:u_k_k1}
  u_{i,k} - u_{i,k+1} \leq C |t_{k+1} - t_k|.
  \end{equation}

Next using the
boundedness of the optimal control $\alpha^*$ in Lemma \ref{lem:unique_minimizer}, we get
    \[
	\begin{aligned}
        u_{i,k} & = I[u_{k+1}](x_i + h \, \alpha^*_{i,k}) + h l_0(\alpha^*_{i,k}) +h \varpi_k \alpha^*_{i,k}+h V_i\\
        & \geq I[u_{k+1}](x_i + h \, \alpha^*_{i,k}) + h C,
   	\end{aligned}
		\]
 where $C$ depends on $l_0$, lower bound of $V$, the uniform bound in $\varpi$, and the bound on $\alpha^*$.
 Now, consider the following difference:
    \begin{equation}\label{eq:u_k1_k}
		\begin{aligned}
			u_{i,k+1} - u_{i,k} & \leq  u_{i,k+1} - I[u_{k+1}](x_i +  h \, \alpha^*_{i,k}) - h C \\
            & \leq \Lip(u_{k+1}) \, | h \, \alpha^*_{i,k}|   + h \, |C|   \\
            & \leq \left(\Lip(\bar{u}) + T \Lip (V) \right)  C^* \,  h  + |C| \,   h \\
            & \leq C |t_{k+1} - t_k|,
		\end{aligned}
		\end{equation}
where, in the third line we use the space Lipschitz estimate $ii.$ of Lemma~\ref{lem:disc_estimates},  and $C^*$ is the bound on $\alpha^*_{i,k}$, using Lemma~\ref{lem:unique_minimizer}

From inequalities \eqref{eq:u_k_k1} and \eqref{eq:u_k1_k}, it follows:
\[
 |u_{i,k} - u_{i,k+1}| \leq C |t_{k+1} - t_k|.
\]
Then, for any two integers $0 \leq k , \, l \leq N $, we get
\[
 |u_{i,k} - u_{i,l}| \leq C |t_k- t_l|.
\]
\end{proof}

Finally, we show that under the compactness assumption on the initial distribution function, $\bar{m}$, there is a space and time discretization relation ensuring that the agent's assets remain bounded.
\begin{lemma}\label{lem:compactness}
Suppose Assumptions  \ref{hyp:H_l_0}-\ref{hyp:V_uT_m0_2nd_derivative}, and \ref{hyp:compactness} hold, for some $\bar{R}>0$, and 
\[
\frac{\rho}{h} < C,
\] 
for some positive constant $C$.
Then, $m_{k}$ is compactly supported, that is, there exists a positive constant $R \geq \bar{R}$, such that
$\supp (m_{k}) \subseteq [-R,R]$, for all $k=1, 2, \ldots, N$.
\end{lemma}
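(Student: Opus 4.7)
The plan is to track how the support of $m_k$ can expand from one time step to the next, using the boundedness of the optimal controls from Lemma~\ref{lem:unique_minimizer} together with the locality of the basis functions $\beta_i$.

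First I would unpack when $m_{i,k+1}$ can be nonzero. Because each $\beta_i$ is supported in $[x_i - \rho, x_i + \rho]$ and all summands in \eqref{eq:FP_discretization} are nonnegative (by $(i.)$ of Lemma~\ref{lem:disc_estimates}), the identity
\[
m_{i,k+1} = \sum_{j\in\mathbb{Z}} \beta_i\bigl(x_j + h\alpha^{*}_{j,k}\bigr)\, m_{j,k}
\]
shows that $m_{i,k+1}>0$ only if there exists $j$ with $m_{j,k}>0$ and
\[
|x_i - x_j - h\alpha^{*}_{j,k}| < \rho.
\]

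Next I would apply Lemma~\ref{lem:unique_minimizer}, which gives a uniform constant $C^{*}$ with $|\alpha^{*}_{j,k}| \leq C^{*}$ independently of $j,k$, $\rho$, and $h$. Setting $R_k := \sup\{|x_j| : m_{j,k} > 0\}$, the previous inequality yields
\[
|x_i| \leq |x_j| + h|\alpha^{*}_{j,k}| + \rho \leq R_k + hC^{*} + \rho,
\]
so $R_{k+1} \leq R_k + hC^{*} + \rho$. Iterating from $R_0 \leq \bar R$ (Assumption~\ref{hyp:compactness}) over the $N$ time steps,
\[
R_k \leq \bar R + k(hC^{*} + \rho) \leq \bar R + NhC^{*} + N\rho = \bar R + TC^{*} + N\rho.
\]
The final step is to use the hypothesis $\rho/h < C$ to write $N\rho = Nh(\rho/h) < TC$, which gives the uniform bound
\[
R_k \leq \bar R + T(C^{*} + C) =: R,
\]
independent of $k$, and hence $\supp(m_k) \subseteq [-R,R]$ for all $k$.

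The main subtlety is not algebraic but rather ensuring that the constant $C^{*}$ from Lemma~\ref{lem:unique_minimizer} is truly independent of the grid parameters $\rho$ and $h$; tracing through its proof, that bound depends only on the Lipschitz constant of $u_{k+1}$ (Lemma~\ref{lem:disc_estimates}(ii)), the uniform bound on $\varpi$, and the coercivity of $L$, none of which involve the grid. The ratio hypothesis $\rho/h < C$ is precisely what prevents a pathological regime in which an overly coarse spatial grid relative to the time step would let the support spread by a non-negligible $\rho$ at each of the $N = T/h$ steps.
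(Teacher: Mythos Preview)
Your proof is correct and follows essentially the same approach as the paper: both track the per-step expansion of the support via the recursion $R_{k+1}\le R_k + hC^* + \rho$ using the locality of $\beta_i$ and the uniform bound on $\alpha^*$ from Lemma~\ref{lem:unique_minimizer}, then invoke $\rho/h<C$ to convert $N\rho$ into a quantity bounded by $TC$. Your closing remark on the grid-independence of $C^*$ is a nice addition not made explicit in the paper.
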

\begin{proof}
By the definition of $m_{k+1}$ in \eqref{eq:FP_discretization}, we have
\[
m_{i,1} = \sum_{j\in\mathbb{Z}} \beta_i(y_j(\alpha^*_{j,1})) \bar{m}_j.
\]
Recall that $\beta_i$ are compactly supported in $[x_i-\rho, x_i+\rho]$, the initial distribution function, $\bar{m}$, is compactly supported in $[-\bar{R}, \bar{R}]$, and from Lemma~\ref{lem:unique_minimizer}, $|\alpha^*_{j,k}| < C^*$, for some constant $C^*>0$.
These yield that for $m_{1}$ to have non-zero values the following relations must hold:
\[
\begin{cases}
& x_j - h C^* -\rho < x_i < x_j + h C^* + \rho , \\
& -\bar{R} < x_j < \bar{R}.
\end{cases}
\]
Which means that $m_1$ may have non-zero values only for $x_i \in [-R_1,R_1]$, where
\[
R_1 = \bar{R} + h C^* + \rho.
\]
Applying iteratively the same reasoning for $k=2, 3, \ldots, N$, gives that  $m_{k}$ is compactly supported in $[-R_k,R_k]$, where
\[
R_k = \bar{R} + k h C^* + k \rho < \bar{R} + T C^* + T \frac{\rho}{h} < \bar{R} + T C^* + T C .
\]
Setting $R = \bar{R} + T C^* + T C $ completes the proof.
\end{proof}

\section{Existence of solutions}
\label{existence}

In this section, we establish the existence of solutions to our discretized problem by using Kakutani's fixed point theorem.  
We recall that a multivalued map \( \Psi \) from a set \( X \) to a set \( Y \) is a map such that for each \( x \in X \), there exists a subset \( S \subseteq Y \), called the value of \( \Psi \) at \( x \), denoted \( \Psi(x) = S \). 
For Kakutani's theorem, we require that $X=Y$ and that
\( \Psi(x) \) is non-empty for all \( x \in X \), so \( \Psi : X \to 2^X \setminus \{\emptyset\} \), where \( 2^X \) denotes the set of all subsets of \( X \).

We begin by defining the set $X$. Then, we build 
a multivalued map
\[
\Psi: X\rightarrow 2^{X}\setminus \{\emptyset\},
\]
which has a closed graph, and for every $x\in X$ the set $\Psi (x)$ is non-empty and convex. 
As we will see, a fixed point of this map corresponds to a solution of the discrete MFG. 

Since the value functions $u_k$ are uniformly Lipschitz, as established in Lemma~\ref{lem:disc_estimates}, its subdifferentials of $I[u_k]$, $p_{i,k}$ are bounded. 
Particularly, we denote
\[
K_p = \{ p \in \mathbb{R} \mid |p| \leq L_u \},
\]
where $L_u$ is the Lipschitz constant of $u$.
Then, there exists a compact and convex set $K_\varpi$, such that for any set of  probability measures $B = \{ B_{i,k} \}$  supported on $K_p$, and density function $m$, the equation 
    \[
    \sum_{i \in \mathbb{Z}} m_{i,k} \Delta x \, \int_{K_p} D_p H(\varpi  + p) \,  d B_{i,k}(p) = - Q_k.
    \]
    has a unique solution $\varpi \in K_\varpi$, since $D_pH$ is strictly monotone continuous function.
We define also the compact set 
\[
K_\alpha := (l_0')^{-1}(-K_p - K_\varpi). 
\]  

The set $X$ is defined as the set of pairs
$(\varpi, B)$, where for each $(i,k)$, 
$\varpi^k\in K_\varpi$
and
$B_{ik}$ is a probability measure supported in $K_p$. This set is compact and convex. 

The mapping $\Psi$ is constructed through the following steps:

\paragraph{Step 1.}{\bf Density function}

For each probability measure $B_{ik}$ we build a probability measure $\mu_{ik}$ by
\[
\mu_{ik}=(l_0')^{-1}(-\varpi_k-p)\#B_{ik}(p).
\]
        For each given probability measure $\mu_{i,k}$ build the distribution function $m$ using the discretized transport equation:
    \[
		\begin{cases}
			\displaystyle m_{i,k+1} = \sum_{j \in \mathbb{Z}} m_{j,k} \, \int_{ K_\alpha } \beta_i(x_j + h \alpha) \,  d \mu_{j,k}(\alpha),\\
			m_{i,0} = \bar{m}_i.
		\end{cases}
	\]

    {\bf Continuity.}  
    The continuity here is straightforward, if $\varpi^n\to \varpi$ and $B^n\rightharpoonup B$, we have 
    $m^n_{i,k} \to m_{ik}$.

\paragraph{Step 2.}   {\bf Price function.} 
Construct the new price function $\tilde \varpi = \{ \tilde \varpi_k \}_{k=0}^{N-1} $ by solving the discretized balance condition for each time step $k$:
\[
    \sum_{i \in \mathbb{Z}} m_{i,k} \Delta x \, \int_{ K_\alpha  } D_p H(\tilde \varpi_k  - \varpi_k - l_0'(\alpha)) \,  d \mu_{i,k}(\alpha) = -Q_k.
\]
    From the definitions of $K_\varpi$ and $K_\alpha$, we have that $- \varpi_k - l_0'(\alpha) \in K_p$, and, thus the equation has a unique solution $\tilde \varpi_k \in K_\varpi$.

  {\bf Continuity.} 
The continuity here is also straightforward, if $m_{ik}^n\to m_{ik}$, $\mu_{ik}^n\rightharpoonup\mu_{ik}$ and $\varpi^n\to \varpi$, we see that $\tilde \varpi^n\to \tilde \varpi$.

\paragraph{Step 3.} {\bf Value Function.} 
        Now, we
        build the value function, $u$, using the discretization of the Hamilton-Jacobi equation:
    \[
		\begin{cases}
			\displaystyle u_{i,k} = \inf_{\alpha \in \mathbb{R}}  \left\{ I[u_{k+1}]\big(x_i + h \alpha\big) + h ( l_0(\alpha) + \tilde \varpi_k \alpha + V_i) \right\}, \\
			u_{i,N} = \bar{u}_i.
		\end{cases}
	\]
    Let 
    \[
g_{i,k}(\alpha) = I[u_{k+1}^n](x_i + h\alpha) + h(l_0(\alpha) +  \tilde\varpi_k \alpha + V_i).
\]
    Then, we obtain a finite set of all possible optimal controls $\alpha_{i,k}$:
\[
        \Lambda_{i,k} := \arginf_{\alpha \in K_\alpha} \left\{ g_{i,k}(\alpha)\right\}, 
\]
where   
\[
        g_{i,k}= I[u_{k+1}]\big(x_i + h \alpha\big) + h ( l_0(\alpha) + \tilde \varpi_k \alpha + V_i).
\]
    Notice that from the optimality condition for optimal controls $\alpha$, we have a relation:
    \[
    \alpha_{i,k}  = (l_0')^{-1} (- p_{i,k} - \tilde\varpi_{k}).
    \] 
    where $p_{i,k}$ is a subdifferential of $I[u_{k+1}]$.

{\bf Continuity.} 

Suppose that $ \tilde\varpi^n\to  \tilde\varpi$. Consider the sequence of functions 
\[
g_{i,k}^n(\alpha) = I[u_{k+1}^n](x_i + h\alpha) + h(l_0(\alpha) +  \tilde\varpi_k^n \alpha + V_i).
\]
The value functions $u_{i,k}^n$ are recursively defined by
\[
u_{i,k}^n = \inf_{\alpha \in K_\alpha} g_{i,k}^n(\alpha),
\]
with terminal condition $u_{i,N}^n = \bar{u}_i$.
We aim to prove:
\begin{enumerate}
\item The sequence $u_{i,k}^n$ converges to $u_{i,k}$ for all $i$ and $k$
\item The limit function satisfies $u_{i,k} = \inf_{\alpha \in K_\alpha} g_{i,k}(\alpha)$
\item The optimal controls $\alpha_{i,k}^n \in \operatorname*{arginf}_{\alpha \in K_\alpha} g_{i,k}^n(\alpha)$ converge to $\alpha_{i,k} \in \operatorname*{arginf}_{\alpha \in K_\alpha} g_{i,k}(\alpha)$
\end{enumerate}

At the terminal time step we have
\[
u_{i,N}^n = \bar{u}_i,
\]
which is constant for all $n$. We proceed by induction, 
assuming that for some $k+1 \leq N$
\[
u_{i,k+1}^n \to u_{i,k+1} \quad \text{as } n \to \infty.
\]
We need to show that $u_{i,k}^n \to u_{i,k}$ as $n \to \infty$, where
\[
u_{i,k} = \inf_{\alpha \in K_\alpha} g_{i,k}(\alpha). 
\]

By the inductive hypothesis
we have
\[
g_{i,k}^n(\alpha) \to g_{i,k}(\alpha)
\]
uniformly over $K_\alpha$ as $n \to \infty$.
Since $K_\alpha$ is compact and $g_{i,k}^n(\alpha)$ converges uniformly, the infimum converges:
\[
u_{i,k}^n = \inf_{\alpha \in K_\alpha} g_{i,k}^n(\alpha) \to \inf_{\alpha \in K_\alpha} g_{i,k}(\alpha) = u_{i,k}.
\]
Let $\alpha_{i,k}^n \in \operatorname*{arginf}_{\alpha \in K_\alpha} g_{i,k}^n(\alpha)$. By compactness of $K_\alpha$, there exists a convergent subsequence $\alpha_{i,k}^{n_j} \to \alpha_{i,k}$ as $j \to \infty$
and by uniform continuity
\[
\alpha_{i,k} \in \operatorname*{arginf}_{\alpha \in K_\alpha} g_{i,k}(\alpha).
\]
Accordingly, we have
\begin{enumerate}
\item $u_{i,k}^n \to u_{i,k}$ for all $i$ and $k$
\item $u_{i,k} = \inf_{\alpha \in K_\alpha} g_{i,k}(\alpha)$
\item $\alpha_{i,k}^n$ converge (along subsequences) to optimal controls $\alpha_{i,k}$
\end{enumerate}
Note that while multiple subsequences of $\{\alpha_{i,k}^n\}$ may converge to different limit points due to potential non-uniqueness of minimizers, all limit points are optimal controls for the limiting problem.

\paragraph{Step 4.}{\bf Relaxed controls}

For each $(i,k)$, consider the set $\Lambda_{ik}\subset K_{\alpha}$ of all optimal controls. 
Let $\hat \Lambda_{ik}=\tilde \varpi+l_0'(\Lambda_{ik})$. Consider the set $\mathcal{B}_{ik}$ of all
probability measures supported on $\hat \Lambda_{ik}$, and denote by $\mathcal{B}$ the cartesian product of these sets 
over all indices $(i,k)$. 

We define
\[
\Psi(\varpi, B)=\{(\tilde \varpi, \tilde B): \tilde B\in \mathcal{B}\}.
\]
Note that this set is compact and convex.

\paragraph{Existence of a fixed point}
We have constructed a multivalued mapp $\Psi$ from a non-empty, compact, and convex subset $X$ into itself. The mapping $\Psi$ satisfies the following properties:
\begin{itemize}
    \item For each $(\varpi,B) \in X$, the image $\Psi(\varpi)$ is a non-empty, convex and compact set. 
   
    \item The mapping $\Psi$ is upper hemicontinuous; that is, for any sequence $\{\varpi^n, B^n\} \subset X$ converging to $(\varpi,B)$, and any sequence $(\tilde \varpi^n,\tilde B^n )\in \Psi(\varpi^n)$ converging to $(\tilde\varpi, \tilde B)$, we have $(\tilde\varpi, \tilde B) \in \Psi((\varpi,B))$.
\end{itemize}

Thus, the conditions of Kakutani's fixed point theorem are satisfied.
Therefore, there exists at least one fixed point $\varpi^* \in K_\varpi^N$ such that $\varpi^* \in \Psi(\varpi^*)$. 
This fixed point corresponds to a solution of our discretized problem.

\section{Monotonicity}
\label{monotonicity}

In this section, we discuss the interpretation of the discretization \eqref{eq:HJ_discretization}, \eqref{eq:FP_discretization} and \eqref{eq:balance_discretization} as a multivalued map.
This allow us to show that the limit of the discretization is a weak solution to Problem~\ref{pro1}.

Set $\Omega_T = \mathbb{R} \times [0,T]$, and denote
\[
\begin{aligned}
& D = (C^\infty(\Omega_T) \cap C([0,T], \mathcal{P})) \times (C^\infty(\Omega_T) \cap W^{1, \infty}(\Omega_T)) \times C^\infty([0,T]), \\
& D_b = \{(m, u, \varpi) \in D \, | \, m(x, 0) = \bar{m}(x), u(x, T) = \bar{u}(x)\}\\
& D_+^b = \{(m, u, \varpi) \in D_b \, | \, m \geq 0, \, m(\cdot, t)\ \text{ compactly supported}  \}.  
\end{aligned}
\]
In the continuous case, under the convexity condition of the map $p \mapsto H(x,p)$, the operator
\[
\mathcal{A}
\begin{bmatrix}
	m \\
	u \\
	\varpi
\end{bmatrix}
=
\begin{bmatrix}
	u_t(x,t) - H(x, \varpi(t) + u_x(x,t) ) \\
	m_t(x,t) - \big(D_pH(x, \varpi(t) + u_x(x,t))m(x,t)\big)_x \\
	\int_{\mathbb{R}} D_pH(x, \varpi(t) + u_x(x,t))m(x,t) dx + Q(t)
\end{bmatrix}
\]
associated with Problem~\ref{pro1} is monotone; that is, it satisfies 
\begin{equation}\label{eq:monotonicity}
	\Big\langle
	\mathcal{A}\begin{bmatrix}
		\tilde{w}
	\end{bmatrix}
	-
	\mathcal{A}\begin{bmatrix}
		w
	\end{bmatrix}
	,
	\begin{bmatrix}
		\tilde{w}
	\end{bmatrix}
	-
	\begin{bmatrix}
		w
	\end{bmatrix}
	\Big\rangle
	\geq 0,
\end{equation}
where $ w = \{m, u, \varpi\}$ and  $ \tilde{w} = \{\tilde{m}, \tilde{u}, \tilde{\varpi}\}$ are arbitrary elements from $D_+^b$ (see \cite{gomes2018mean}, Proposition 9).

Here, we use the definition of a weak solution similar to one in \cite{FG2}, equation (1.6) (see, also \cite{FeGoTa20}).
\begin{definition}
A triplet $ w = \{m, u, \varpi\} \in \mathcal{P}(\Omega_T) \times L^\infty(\Omega_T) \times L^\infty([0,T])$,  with $m \geq 0$, and compactly supported, is a weak solution induced by monotonicity to Problem~\ref{pro1}, if it satisfies the variational inequality
\begin{equation}\label{eq:weak_solution}
	\Big\langle
	\mathcal{A}\begin{bmatrix}
		\tilde{w}
	\end{bmatrix}
	,
	\begin{bmatrix}
		\tilde{w}
	\end{bmatrix}
	-
	\begin{bmatrix}
		w
	\end{bmatrix}
	\Big\rangle
	\geq 0,
\end{equation}
for every $ \tilde{w}\in D_+^b$ .
\end{definition}

Note, that a classical solution to \eqref{eq:MFG} is always a weak solution. 
In the case of a regular $H$, and smooth $ \{m, u, \varpi\} \in D_+^b$, a weak solution is also a classical solution in the set $m>0$.
Indeed, take a triplet $\tilde{w} = \{\tilde{m}, \tilde{u}, \tilde{\varpi} \} \in C^\infty(\Omega_T) \times C^\infty(\Omega_T) \times C^\infty([0,T])$, and sufficiently small $\varepsilon > 0$ with $m + \varepsilon \tilde{m} \geq 0$, $\, \tilde{m}(x,0)=0, \, \tilde{u}(x,T) = 0 $.
In \eqref{eq:weak_solution} taking $w^\varepsilon = w + \varepsilon \tilde{w}$, and letting $\varepsilon \rightarrow 0$, gives 
\[
	\Big\langle
	\mathcal{A}\begin{bmatrix}
		w
	\end{bmatrix}
	,
	\begin{bmatrix}
		\tilde{w}
	\end{bmatrix}
	\Big\rangle
	\geq 0,
 \]
for arbtitrary $\tilde{w}$, which yields $A[w] = 0$.

\subsection{Multivalued map}

As before, let $\Lambda_{i,k}$ be the set of all controls that minimize the discrete cost functional in \eqref{eq:HJ_discretization} at the point $(x_i, t_k)$.
In general, $\Lambda_{i,k}$ may contain multiple values. Thus, as before, we consider  relaxed controls; that is,  probability measures $\mu_{i,k}$ supported on  $\Lambda_{i,k}$. 
While
the value $I[u_{k+1}](y_i(\alpha^*)) + h L_{i,k}( \alpha^*)$ is uniquely defined, the operator corresponding to the transport equation discretization depends on the particular choice of a relaxed control. 
Thus, it is natural to regard the discretization \eqref{eq:HJ_discretization}, \eqref{eq:FP_discretization} and \eqref{eq:balance_discretization} as a multivalued map that accounts for all possible choices of optimal controls.

Let $w = \{ m, u, \varpi \}$ be a discrete triplet. By  $\mu^*$ we denote a probability measure such that for each $(i,k)$, 
$\mu_{i,k}$ is supported on 
$\Lambda_{i,k}$. We denote by $\mathcal{M}[u]$ the set of all such probability measures.  
For any discrete triplet $w$  the operator $A_{\rho,h}$ is defined as the set of values given by 
\begin{equation}\label{eq:multivalued}
A_{\rho,h}
\begin{bmatrix}
	m \\
	u \\
	\varpi
\end{bmatrix}
=
\begin{bmatrix}
	-u_{i,k} + I[u_{k+1}](y_i(\alpha_{i,k}^{*})) + h L_{i,k}( \alpha_{i,k}^{*}) \\
	m_{i,k+1} - \sum_{n=0}^M \int \beta_i(y_n(\alpha)) m_{n,k} \, d\mu^*_{n,k}(\alpha)\\
	- h \left( \sum_{i\in\mathbb{Z}} \int \alpha m_{i,k} \, d\mu^*_{i,k}(\alpha) \Delta x - Q_k \right)
\end{bmatrix}, 
\end{equation}
where $\mu^* \in \mathcal{M}[u]$.

The third component of $A_{\rho,h}$ corresponds to the balance condition in 
terms of $\alpha^*$, as in \eqref{eq:balance_alpha}.  
The corresponding discretization, for $ k=0,1, \ldots, N-1$, becomes
\begin{equation}\label{eq:alpha*_balance_discretization}
	\sum_{i\in\mathbb{Z}} \int \alpha m_{i,k} \, d\mu^*_{i,k}(\alpha) \, \rho = Q_k.
\end{equation}
Note that the factor $h$ in the third line does not change the solutions, but it is needed to make the operator monotone.
The subsequent discussion proves that $A_{\rho,h}$ is a monotone operator. 

\begin{lemma}\label{lem:monotonicity}
	Under Assumption~\ref{hyp:H_l_0}, the operator $A_{\rho,h}$ in \eqref{eq:multivalued} is monotone. 
\end{lemma}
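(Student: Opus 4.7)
The plan is to adapt the continuous monotonicity argument from \cite{gomes2018mean} (Proposition 9) to the semi-Lagrangian discretization. Given two discrete triplets $w = \{m, u, \varpi\}$ and $\tilde w = \{\tilde m, \tilde u, \tilde \varpi\}$ with associated relaxed optimal controls $\mu^* \in \mathcal{M}[u]$ and $\tilde \mu^* \in \mathcal{M}[\tilde u]$, I would expand the pairing
\[
\langle A_{\rho,h}[\tilde w] - A_{\rho,h}[w],\, \tilde w - w \rangle = P_1 + P_2 + P_3,
\]
where $P_1$ pairs the HJ row with $\tilde m - m$, $P_2$ pairs the transport row with $\tilde u - u$, and $P_3$ pairs the balance row with $\tilde \varpi - \varpi$. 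The aim is to show $P_1 + P_2 + P_3 \geq 0$.

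The first step is a discrete integration by parts in time. The contribution $-(\tilde u - u)_{i,k}(\tilde m - m)_{i,k}$ coming from $-u_{i,k}$ in the first row of \eqref{eq:multivalued} combines with the contribution $(\tilde u - u)_{i,k+1}(\tilde m - m)_{i,k+1}$ coming from $m_{i,k+1}$ in the second row to form a telescoping sum whose boundary values at $k = 0$ and $k = N$ vanish thanks to $m_{i,0} = \bar m_i = \tilde m_{i,0}$ and $u_{i,N} = \bar u_i = \tilde u_{i,N}$. The spatial cross terms of $P_2$, of the form $\sum_i \beta_i(y_n(\alpha))(\tilde u - u)_{i,k+1}$, are rewritten via the interpolation identity $\sum_i \phi_i \beta_i(x) = I[\phi](x)$ as $I[\tilde u_{k+1} - u_{k+1}](y_n(\alpha))$; this encodes the discrete adjoint property of the semi-Lagrangian transport. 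After rearrangement,
\begin{align*}
P_1 + P_2 &= \sum_{k,i} \tilde m_{i,k}\bigl[ I[u_{k+1}](y_i(\tilde\alpha^*)) - I[u_{k+1}](y_i(\alpha^*))\bigr] \\
&\quad + \sum_{k,i} m_{i,k}\bigl[ I[\tilde u_{k+1}](y_i(\alpha^*)) - I[\tilde u_{k+1}](y_i(\tilde\alpha^*))\bigr] \\
&\quad + h \sum_{k,i} (\tilde L_{i,k}(\tilde\alpha^*) - L_{i,k}(\alpha^*))(\tilde m - m)_{i,k},
\end{align*}
with every occurrence of $\alpha^*$ or $\tilde\alpha^*$ interpreted as an integral against $\mu^*_{i,k}$ or $\tilde\mu^*_{i,k}$ respectively.

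The second step invokes optimality. Minimality of $\alpha^*_{i,k}$ for the $u$-problem and of $\tilde\alpha^*_{i,k}$ for the $\tilde u$-problem give
\[
I[u_{k+1}](y_i(\tilde\alpha^*)) - I[u_{k+1}](y_i(\alpha^*)) + h (L_{i,k}(\tilde\alpha^*) - L_{i,k}(\alpha^*)) \geq 0,
\]
together with the analogous inequality obtained by swapping $u, L, \alpha^*$ with $\tilde u, \tilde L, \tilde\alpha^*$. Weighting the first by $\tilde m_{i,k} \geq 0$ and the second by $m_{i,k} \geq 0$, substituting into the expression for $P_1 + P_2$, and simplifying using $\tilde L_{i,k}(\alpha) - L_{i,k}(\alpha) = (\tilde\varpi_k - \varpi_k)\alpha$, the $L$-terms telescope to leave
\[
P_1 + P_2 \geq h \sum_k (\tilde\varpi_k - \varpi_k) \sum_i \bigl(\tilde m_{i,k} \tilde\alpha^*_{i,k} - m_{i,k} \alpha^*_{i,k}\bigr) \Delta x.
\]
A direct calculation from the third row of \eqref{eq:multivalued} gives
\[
P_3 = -h \sum_k (\tilde\varpi_k - \varpi_k) \sum_i \bigl(\tilde m_{i,k} \tilde\alpha^*_{i,k} - m_{i,k} \alpha^*_{i,k}\bigr) \Delta x,
\]
so $P_1 + P_2 + P_3 \geq 0$. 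The factor $h$ placed explicitly in the third row of $A_{\rho,h}$ is exactly what makes this cancellation line up.

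The principal challenge is the algebraic bookkeeping of the first step, namely handling the four interpolation evaluations coming from the combinations $(u, \alpha^*)$, $(u, \tilde\alpha^*)$, $(\tilde u, \alpha^*)$, $(\tilde u, \tilde\alpha^*)$ and consistently replacing pointwise controls with integrals against $\mu^*_{i,k}, \tilde\mu^*_{i,k}$ whenever they act in the transport or balance rows. Once this organization is carried out, non-negativity follows from optimality alone and requires only convexity of $l_0$ (Assumption~\ref{hyp:H_l_0}), mirroring the continuous role of convexity of $H$ in $p$.
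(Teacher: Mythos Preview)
Your proposal is correct and follows essentially the same route as the paper: both arguments exploit the adjoint identity $\sum_i \beta_i(y_n(\alpha))\,v_{i,k+1} = I[v_{k+1}](y_n(\alpha))$ to couple the transport and Hamilton--Jacobi rows, then invoke optimality of $\mu^*$ and $\tilde\mu^*$ to produce nonnegativity. The only organizational difference is that the paper uses the identity $L_{i,k}(\alpha)-\alpha\varpi_k = \tilde L_{i,k}(\alpha)-\alpha\tilde\varpi_k$ to absorb the balance-row contribution directly into the Lagrangian, arriving at a sum of manifestly nonnegative terms, whereas you keep $P_3$ separate and show it cancels the lower bound on $P_1+P_2$; you are also more explicit than the paper about the telescoping boundary terms, which the paper hides under ``rearranging terms.''
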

\begin{proof}
    Let $ w = \{m, u, \varpi\}$ and  $ \tilde{w} = \{\tilde{m}, \tilde{u}, \tilde{\varpi}\}$ be two given triplets with $m, \tilde{m} \geq 0$, and $\mu^{*} \in  \mathcal{M}[u] , \,  \tilde{\mu}^* \in \mathcal{M}[\tilde{u}]$.
Note that all $\alpha_{i,k}^{*} \in \Lambda_{i,k}$ are minimizers for the Hamilton-Jacobi discretization, and $\mu^*_{i,k}(\alpha)$ is a probability measure supported on that set, $\Lambda_{i,k}$, therefore, 
\[
I[u_{k+1}](y_i(\alpha_{i,k}^{*})) + h L_{i,k}( \alpha_{i,k}^{*}) = \int I[u_{k+1}](y_i(\alpha)) + h L_{i,k}( \alpha) \, d\mu^*_{i,k}(\alpha).
\]
The following relations between the Lagrangians $L$ and $\tilde{L}(x, t, \alpha) := l_0(\alpha) + V(x) + \tilde{\varpi}(t) \alpha$ hold true,
 \begin{equation}\label{eq:L_identity}
        L_{i,k}( \alpha) - \alpha \varpi_k = \tilde{L}_{i,k}( \alpha) - \alpha \tilde{\varpi}_k.
 \end{equation}
Further, we observe the following identity
\begin{equation}\label{eq:beta_I_identity}
	\begin{aligned}
		&\sum_{i\in\mathbb{Z}} \sum_{n\in\mathbb{Z}} \int \beta_{i} \left( y_n( \alpha) \right) m_{n,k} \, d\mu^*_{n,k}(\alpha)  v_{i,k+1} =
		\sum_{n\in\mathbb{Z}} m_{n,k} \sum_{i\in\mathbb{Z}} \int \beta_{i} \left( y_n( \alpha) \right) \, d\mu^*_{n,k}(\alpha) v_{i,k+1} \\
		=& \sum_{i\in\mathbb{Z}} m_{i,k} \int  \sum_{n\in\mathbb{Z}}  v_{n,k+1} \beta_{n} \left( y_i( \alpha) \right) \, d\mu^*_{i,k}(\alpha)  =
		\sum_{i\in\mathbb{Z}} m_{i,k} \int I \left[ v_{k+1} \right] \left( y_i( \alpha)  \right) \, d\mu^*_{i,k}(\alpha).
	\end{aligned}
\end{equation}

Now, we show that $A_{\rho,h}$ is monotone. 
\[
\begin{aligned}
&\Big\langle
A_{\rho,h}\begin{bmatrix}
\tilde{w}
\end{bmatrix}
-
A_{\rho,h}\begin{bmatrix}
w\end{bmatrix}
,
\begin{bmatrix}
\tilde{w}
\end{bmatrix}
-
\begin{bmatrix}
w\end{bmatrix}
\Big\rangle
\\
= & h \sum_{k=0}^{N-1} \rho \sum_{i\in\mathbb{Z}} \Big[
\Big( -u_{i,k} + \tilde{u}_{i,k}  +  \int I[u_{k+1}] \left( y_i(\alpha) \right) + h  L_{i,k}(\alpha) d\mu^*_{i,k}(\alpha) \\ 
& - \int I[\tilde{u}_{k+1}] \left( y_i(\alpha) \right)  +  h \tilde{L}_{i,k}(\alpha) d\tilde{\mu}^*_{i,k}(\alpha) \Big) (m_{i,k} - \tilde{m}_{i,k}) \\
&+ \Big( m_{i,k+1} - \tilde{m}_{i,k+1} - \sum_{n\in\mathbb{Z}} \int \beta_i(y_n(\alpha)) m_{n,k} \, d\mu^*_{n,k}(\alpha) \Big. \\
&+ \Big. \sum_{n\in\mathbb{Z}} \int \beta_i(y_n(\alpha)) \tilde{m}_{n,k} \, d\tilde{\mu}^*_{n,k}(\alpha) \Big) (u_{i,k+1} - \tilde{u}_{i,k+1}) \\
&+ h \Big( - \int \alpha m_{i,k} \, d\mu^*_{i,k}(\alpha) + \int \alpha \tilde{m}_{i,k} \, d\tilde{\mu}^*_{i,k}(\alpha) \Big) (\varpi_k - \tilde{\varpi}_k) \Big] 
\end{aligned}
\]
Using identities \eqref{eq:L_identity}, \eqref{eq:beta_I_identity}, and rearranging terms, we can simplify the expression:
\[
\begin{aligned}
&\Big\langle
A_{\rho,h}\begin{bmatrix}
\tilde{w}
\end{bmatrix}
-
A_{\rho,h}\begin{bmatrix}
w\end{bmatrix}
,
\begin{bmatrix}
\tilde{w}
\end{bmatrix}
-
\begin{bmatrix}
w\end{bmatrix}
\Big\rangle
\\
= & h \sum_{k=0}^{N-1} \rho \sum_{i\in\mathbb{Z}} \Big[
 m_{i,k} \Big( \int I[\tilde{u}_{k+1}] \left( y_i(\alpha) \right) + h \tilde{L}_{i,k}(\alpha) d\mu^*_{i,k}(\alpha) \Big. \\
& \Big.  - \int I[\tilde{u}_{k+1}] \left( y_i(\alpha) \right) + h  \tilde{L}_{i,k}(\alpha) d\tilde{\mu}^*_{i,k}(\alpha) \Big) \\
& + \tilde{m}_{i,k} \Big( \int I[u_{k+1}] \left( y_i(\alpha) \right)  + h  L_{i,k}(\alpha) d\tilde{\mu}^*_{i,k}(\alpha) \Big. \\
& -  \Big. \int I[u_{k+1}] \left( y_i(\alpha) \right)  + h  L_{i,k}(\alpha) d\mu^*_{i,k}(\alpha) \Big) \Big]. \\
\end{aligned}
\]
Noting that $\mu^*_{i,k}$ is supported on the minimizers of $I[u_{k+1}](y_i(\alpha)) + h L_{i,k}(\alpha)$ and $\tilde{\mu}^*_{i,k}$ is supported on the minimizers of $I[\tilde{u}_{k+1}](y_i(\alpha)) + h \tilde{L}_{i,k}(\alpha)$,  we have:
\[
 \int \left( I[\tilde{u}_{k+1}](y_i(\alpha)) + h \tilde{L}_{i,k}(\alpha) \right) d\tilde{\mu}^*_{i,k}(\alpha) 
 \leq 
 \int \left( I[\tilde{u}_{k+1}](y_i(\alpha)) + h \tilde{L}_{i,k}(\alpha) \right) d\mu^*_{i,k}(\alpha),
\]
and similarly,
\[
\int \left( I[u_{k+1}](y_i(\alpha)) + h L_{i,k}(\alpha) \right) d\mu^*_{i,k}(\alpha)
\leq 
\int \left( I[u_{k+1}](y_i(\alpha)) + h L_{i,k}(\alpha) \right) d\tilde{\mu}^*_{i,k}(\alpha) .
\]
Therefore, the entire expression is nonnegative:

\[
\Big\langle
A_{\rho,h}[w] - A_{\rho,h}[\tilde{w}]
,
w - \tilde{w}
\Big\rangle \geq 0.\qedhere
\]
\end{proof}

\begin{definition}
A triplet $ w = \{m, u, \varpi\}$, with  $m \geq 0$, and compactly supported, is a weak solution induced by monotonicity to the multivalued operator $A_{\rho, h}$ defined by \eqref{eq:multivalued}, if it satisfies the variational inequality
\begin{equation}\label{eq:discrete_weak_solution}
	\Big\langle
	A_{\rho, h}
    \begin{bmatrix}
		\tilde{w}
	\end{bmatrix}
	,
	\begin{bmatrix}
		\tilde{w}
	\end{bmatrix}
	-
	\begin{bmatrix}
		w
	\end{bmatrix}
	\Big\rangle
	\geq 0,
\end{equation}
for every $ \tilde{w} = \{ \tilde{m}, \tilde{u}, \tilde{\varpi} \}$, with $\tilde{m} > 0 $, $\tilde{m}_{i,0} = \bar{m}_i$, $\tilde{u}_{i,N} = \bar{u}_i$, and probability measures $\tilde{\mu}^*_{i,k}$ supported on $\tilde{\Lambda}_{i,k}$.
\end{definition}

It is worth noting that classical solutions are always weak solutions. 
On the continuous problem, due to lack of regularity, weak solutions may fail to be classical solutions.
But, here, in the discrete case, there is no issue with regularity, and a weak solution is a classical solution whenever $m>0$.
\begin{lemma}\label{lem:weak_strong_sol}
Let Assumption \ref{hyp:H_l_0} and \ref{hyp:compactness} hold.
Let $ w = \{m, u, \varpi\}$ be a weak solution for the operator $A_{\rho,h}$, then $A_{\rho,h} [w] = 0$, for some selection of $\mu \in  \mathcal{M}[u]$, at all grid points where $m>0$.
\end{lemma}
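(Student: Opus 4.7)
The plan is to perform a discrete first-variation (Minty-type) argument on the variational inequality \eqref{eq:discrete_weak_solution}. Fix an arbitrary triplet of grid-function perturbations $\eta=(\eta_m,\eta_u,\eta_\varpi)$ with $\eta_{m,i,0}\equiv 0$, $\eta_{u,i,N}\equiv 0$, and with $\eta_m$ supported in the set $\{(i,k):m_{i,k}>0\}$. For $\varepsilon>0$ sufficiently small the perturbed triplet $\tilde w^\varepsilon:=w+\varepsilon\eta$ is admissible as a test function in \eqref{eq:discrete_weak_solution}, since it preserves the initial and terminal data and keeps $\tilde m^\varepsilon$ strictly positive on the support of $\eta_m$ (this is exactly where the hypothesis $m>0$ is used).

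For each such $\varepsilon$ one then selects $\tilde\mu^{*,\varepsilon}\in\mathcal{M}[\tilde u^\varepsilon]$. As $\varepsilon\to 0$, the objective $\alpha\mapsto I[\tilde u^\varepsilon_{k+1}](y_i(\alpha))+h\tilde L^\varepsilon_{i,k}(\alpha)$ converges uniformly on the compact set $K_\alpha$ from Lemma~\ref{lem:unique_minimizer} to its unperturbed counterpart. An upper-hemicontinuity (Berge) argument then shows that any weak-$*$ accumulation point $\mu_{i,k}$ of $\tilde\mu^{*,\varepsilon}_{i,k}$ is supported on $\Lambda_{i,k}$, hence $\mu\in\mathcal{M}[u]$. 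Extract a subsequence (not relabeled) along which $\tilde\mu^{*,\varepsilon}_{i,k}\rightharpoonup\mu_{i,k}$ for each $(i,k)$.

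Insert $\tilde w^\varepsilon$ into \eqref{eq:discrete_weak_solution}; since $\tilde w^\varepsilon-w=\varepsilon\eta$, dividing by $\varepsilon$ gives $\langle A_{\rho,h}[\tilde w^\varepsilon],\eta\rangle\geq 0$. Each component of $A_{\rho,h}[\tilde w^\varepsilon]$ depends continuously on $\tilde w^\varepsilon$ and weak-$*$ continuously on $\tilde\mu^{*,\varepsilon}$, because the test integrands $\beta_i(y_n(\cdot))$ and $\alpha$ are continuous and bounded on $K_\alpha$, and the piecewise linear interpolation operator $I[\cdot]$ is Lipschitz. Passing to the limit yields $\langle A_{\rho,h}[w]_\mu,\eta\rangle\geq 0$, where $A_{\rho,h}[w]_\mu$ denotes the operator evaluated with the selection $\mu$. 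The admissible class of $\eta$ is symmetric under $\eta\mapsto-\eta$ (possibly with a smaller $\varepsilon$) on the positivity set of $m$, so the inequality is in fact an equality. Ranging $\eta$ over unit-type perturbations concentrated at each grid point with $m_{i,k}>0$ then forces the corresponding components of $A_{\rho,h}[w]_\mu$ to vanish there, which is the conclusion.

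The main obstacle is the selection step: the multivalued nature of $\mathcal{M}[\tilde u^\varepsilon]$ means that a careless choice of $\tilde\mu^{*,\varepsilon}$ might not converge, and one has to invoke upper hemicontinuity of the argmin correspondence, which is available because $L$ is strictly convex by Assumption~\ref{hyp:H_l_0}, $I[u_{k+1}]$ is Lipschitz and piecewise linear, and minimizers lie in the compact set $K_\alpha$ from Lemma~\ref{lem:unique_minimizer}. Assumption~\ref{hyp:compactness}, together with Lemma~\ref{lem:compactness}, ensures that the sums involved reduce to finite sums over a bounded index range, so there are no issues with interchanging limits and summations. Once the selection issue is handled, the remaining passages to the limit parallel the classical Minty argument already sketched immediately after \eqref{eq:weak_solution} in the continuous case.
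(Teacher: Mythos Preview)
Your proof is correct and follows essentially the same Minty-type perturbation argument as the paper's own proof: perturb the weak solution by $\varepsilon\eta$, use upper hemicontinuity of the argmin correspondence to extract a limiting selection $\mu\in\mathcal{M}[u]$, pass to the limit, and then invoke the sign-symmetry of the admissible test class to force the components to vanish on $\{m>0\}$. The paper additionally uses the algebraic identity \eqref{eq:beta_I_identity} to cancel all $O(\varepsilon)$ cross-terms \emph{before} letting $\varepsilon\to 0$, but this is a computational convenience rather than a different strategy.
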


\begin{proof}
Take an $\varepsilon > 0$, and a triplet $\tilde{w} = \{\tilde{m}, \tilde{u}, \tilde{\varpi} \}$, $\tilde{m}_{i,0} = 0$ and $\tilde{u}_{i,N} = 0$.
We require $\tilde m$ to be supported on the set $m>0$
but do not impose $\tilde m>0$. 
Accordingly, for $\varepsilon$ small enough
$m+\varepsilon \tilde m\geq 0$. 
Let $w^\varepsilon = w + \varepsilon \tilde{w}$.
Let $\Lambda_{i,k}$ be the set of minimizers for $w$ and ${\Lambda}_{i,k}^\varepsilon$ the set of minimizes for $w^\varepsilon$. 
Note that any sequence $\alpha^\varepsilon_{i,k} \in \Lambda^\varepsilon_{i,k}$ will have a subsequence
converging to some 
$ \alpha^*_{i,k}\in \Lambda_{i,k}$, when $\varepsilon \rightarrow 0$.
Furthermore, 
any sequence $\mu^\varepsilon \in \mathcal{M}[u^\varepsilon]$
will have a weakly converging subsequence to some $\mu \in  \mathcal{M}[u]$.

Because $w$ is a weak solution, we have the following inequality:
\[
\begin{aligned}
0 \leq &
\Big\langle
A_{\rho,h}\begin{bmatrix}
w + \varepsilon \tilde{w}\end{bmatrix},
\begin{bmatrix}
\tilde{w}
\end{bmatrix}
\Big\rangle
\\
= & 
h \sum_{k=0}^{N-1} \rho \sum_{i\in\mathbb{Z}} \Big[
\Big( - u_{i,k} - \varepsilon \tilde{u}_{i,k} +  \int I[u_{k+1} + \varepsilon \tilde{u}_{k+1}] \left( y_i(\alpha) \right) \, d\mu^\varepsilon_{i,k}(\alpha) \\
&+ h \int \big( l_0(\alpha) + V_i + (\varpi_k + \varepsilon \tilde{\varpi}_k) \alpha \big) \, d\mu^\varepsilon_{i,k}(\alpha) \Big) \tilde{m}_{i,k} \\
&+ \Big( m_{i,k+1} + \varepsilon \tilde{m}_{i,k+1} - \sum_{n \in \mathbb{Z}} \int \beta_i(y_n(\alpha)) (m_{n,k} + \varepsilon \tilde{m}_{n,k}) \, d\mu^\varepsilon_{n,k}(\alpha) \Big) \tilde{u}_{i,k+1} \\
&+ h \Big( - \int \alpha (m_{i,k} + \varepsilon \tilde{m}_{i,k}) \, d\mu^\varepsilon_{i,k}(\alpha) \Big) \tilde{\varpi}_k \Big]. 
\end{aligned}
\]
Using the identity \eqref{eq:beta_I_identity} and after cancelling similar terms, we obtain 
\[
\begin{aligned} 
& h \sum_{k=0}^{N-1} \rho \sum_{i\in\mathbb{Z}} \Big[
\Big( - u_{i,k} + \int I[u_{k+1}] \left( y_i(\alpha) \right) + h L_{i,k}(\alpha) \, d\mu^\varepsilon_{i,k}(\alpha) \Big) \tilde{m}_{i,k} \\
&+ \Big( m_{i,k+1} - \sum_{n \in \mathbb{Z}} \int \beta_i(y_n(\alpha)) m_{n,k} \, d\mu^\varepsilon_{n,k}(\alpha) \Big) \tilde{u}_{i,k+1} \\
&+ h \Big( - \int \alpha m_{i,k} \, d\mu^\varepsilon_{i,k}(\alpha) \Big) \tilde{\varpi}_k \Big]
\geq 0.
\end{aligned}
\]
Now letting $\varepsilon \rightarrow 0$, we get:
\[
\Big\langle
A_{\rho,h}\begin{bmatrix}
w\end{bmatrix},
\begin{bmatrix}
\tilde{w}\end{bmatrix}
\Big\rangle \geq 0
\]
for arbitrary $\tilde{w}$, which yields 
that the second and third components of $A_{\rho,h} [w] $
must vanish everywhere, while the first component is zero 
at all indices such that $m_{i,k}>0$.
\end{proof}

Next, we show that any two solutions for the discretized operator $A_{\rho,h}$ have common minimizing points $\alpha^*$, and minimizing measures $\mu^*$.

\begin{proposition}\label{prop:uniqueness}
	Suppose Assumptions \ref{hyp:H_l_0},  \ref{hyp:V_uT_m0_2nd_derivative}  and \ref{hyp:compactness} hold.
	Let $ w = \{m, u, \varpi\}$ and  $ \tilde{w} = \{\tilde{m}, \tilde{u}, \tilde{\varpi}\}$  with $m, \tilde{m} > 0$ be two weak solutions for the operator $A_{\rho,h}$, defined by \eqref{eq:discrete_weak_solution}.
	
    Then $\tilde{\Lambda}_{i,k} \cap \Lambda_{i,k} \neq \varnothing$.
    Furthermore, if
    $A_{\rho,h}[w] = A_{\rho,h}[\tilde{w}] = 0$, for  $\mu^*_{i,k} \in \mathcal{M}[u]$ and $\tilde{\mu}^*_{i,k} \in \mathcal{M}[\tilde{u}]$, 
    we have $\tilde{\mu}^*_{i,k} \in \mathcal{M}[u]$ and $\mu^*_{i,k} \in \mathcal{M}[\tilde{u}]$.
\end{proposition}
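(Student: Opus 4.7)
The plan is to combine Lemma~\ref{lem:weak_strong_sol} (weak $=$ classical where $m>0$) with the monotonicity identity derived inside the proof of Lemma~\ref{lem:monotonicity}, and then exploit strict positivity of $m$ and $\tilde m$ to force equality in each term.

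First, since $m,\tilde m>0$ at every grid node and $w,\tilde w$ are weak solutions, Lemma~\ref{lem:weak_strong_sol} produces selections $\mu^{*}\in\mathcal{M}[u]$ and $\tilde{\mu}^{*}\in\mathcal{M}[\tilde u]$ such that $A_{\rho,h}[w]=0$ and $A_{\rho,h}[\tilde w]=0$ pointwise on the whole grid. In particular,
\[
\Big\langle A_{\rho,h}[\tilde w]-A_{\rho,h}[w],\, \tilde w-w\Big\rangle = 0.
\]
Next I would reuse the algebraic simplification carried out in the proof of Lemma~\ref{lem:monotonicity} (applying the identities \eqref{eq:L_identity} and \eqref{eq:beta_I_identity} so that the price and transport terms telescope). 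Writing $g^{u,L}_{i,k}(\alpha):=I[u_{k+1}](y_i(\alpha))+hL_{i,k}(\alpha)$ and analogously $g^{\tilde u,\tilde L}_{i,k}$, that computation rewrites the inner product as
\[
h\sum_{k=0}^{N-1}\rho\sum_{i\in\mathbb Z}\Big[\, m_{i,k}\,\Delta^{\tilde u}_{i,k}\;+\;\tilde m_{i,k}\,\Delta^{u}_{i,k}\,\Big],
\]
where
\[
\Delta^{\tilde u}_{i,k}=\int g^{\tilde u,\tilde L}_{i,k}\,d\mu^{*}_{i,k}-\int g^{\tilde u,\tilde L}_{i,k}\,d\tilde\mu^{*}_{i,k},\qquad \Delta^{u}_{i,k}=\int g^{u,L}_{i,k}\,d\tilde\mu^{*}_{i,k}-\int g^{u,L}_{i,k}\,d\mu^{*}_{i,k}.
\]
Since $\tilde\mu^{*}_{i,k}$ is supported on $\tilde\Lambda_{i,k}=\arginf g^{\tilde u,\tilde L}_{i,k}$, we have $\Delta^{\tilde u}_{i,k}\ge 0$, and symmetrically $\Delta^{u}_{i,k}\ge 0$.

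Combining the above, the sum of non-negative terms weighted by $m_{i,k},\tilde m_{i,k}>0$ equals zero, so every bracket vanishes:
\[
\int g^{\tilde u,\tilde L}_{i,k}\,d\mu^{*}_{i,k}=\int g^{\tilde u,\tilde L}_{i,k}\,d\tilde\mu^{*}_{i,k}=\inf_{\alpha\in\mathbb R} g^{\tilde u,\tilde L}_{i,k}(\alpha),
\]
and likewise with $u,L$ and the roles swapped. Because $g^{\tilde u,\tilde L}_{i,k}$ is continuous and $\mu^{*}_{i,k}$ is a probability measure, the first equality forces $\mathrm{supp}\,\mu^{*}_{i,k}\subseteq \arginf g^{\tilde u,\tilde L}_{i,k}=\tilde\Lambda_{i,k}$, that is, $\mu^{*}_{i,k}\in\mathcal{M}[\tilde u]$. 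Symmetrically, $\tilde\mu^{*}_{i,k}\in\mathcal{M}[u]$. In particular $\mathrm{supp}\,\mu^{*}_{i,k}\subseteq \Lambda_{i,k}\cap\tilde\Lambda_{i,k}$, so $\Lambda_{i,k}\cap\tilde\Lambda_{i,k}\neq\varnothing$.

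The main obstacle is the bookkeeping of the monotonicity identity: one has to verify that the algebraic manipulation used in Lemma~\ref{lem:monotonicity} still produces a sum of non-negative summands when $\mu^{*}$ and $\tilde\mu^{*}$ are different relaxed selections, so that strict positivity of $m,\tilde m$ yields termwise vanishing rather than only vanishing of the global sum. Once that is in place, the support conclusion (from equality with the infimum) and hence the intersection claim follow directly.
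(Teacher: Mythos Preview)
Your proposal is correct and follows essentially the same route as the paper: invoke Lemma~\ref{lem:weak_strong_sol} to upgrade weak solutions to $A_{\rho,h}[w]=A_{\rho,h}[\tilde w]=0$, feed this into the monotonicity identity from Lemma~\ref{lem:monotonicity} to get a vanishing sum of nonnegative terms, and use $m,\tilde m>0$ to force termwise equality. Your final step (equality with the infimum forces the support of $\mu^{*}_{i,k}$ into $\tilde\Lambda_{i,k}$) is in fact stated a bit more cleanly than the paper's own phrasing, and the ``bookkeeping obstacle'' you flag is not an issue since Lemma~\ref{lem:monotonicity} is already proved for arbitrary selections $\mu^*\in\mathcal M[u]$, $\tilde\mu^*\in\mathcal M[\tilde u]$.
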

\begin{proof}
From Lemma~\ref{lem:weak_strong_sol}, we have $A_{\rho,h}[w] = A_{\rho,h}[\tilde{w}] = 0$, for some selections of probability measures $\mu^*_{i,k} \in \mathcal{M}[u]$ and $\tilde{\mu}^*_{i,k} \in \mathcal{M}[\tilde{u}]$. Therefore, from the proof of Lemma~\ref{lem:monotonicity}, we get
\[
\begin{aligned}
 & h \sum_{k=0}^{N-1} \rho \sum_{i\in\mathbb{Z}} \Big[
 m_{i,k} \Big( \int I[\tilde{u}_{k+1}] \left( y_i(\alpha) \right) + h \tilde{L}_{i,k}(\alpha) d\mu^*_{i,k}(\alpha) \Big. \\
& \Big.  - \int I[\tilde{u}_{k+1}] \left( y_i(\alpha) \right) + h  \tilde{L}_{i,k}(\alpha) d\tilde{\mu}^*_{i,k}(\alpha) \Big) \\
& + \tilde{m}_{i,k} \Big( \int I[u_{k+1}] \left( y_i(\alpha) \right)  + h  L_{i,k}(\alpha) d\tilde{\mu}^*_{i,k}(\alpha) \Big. \\
& -  \Big. \int I[u_{k+1}] \left( y_i(\alpha) \right)  + h  L_{i,k}(\alpha) d\mu^*_{i,k}(\alpha) \Big) \Big] 
= 0.
\end{aligned}
\]
Since every term of the sum is nonnegative, we get
	\[
	\int I[u_{k+1}] \left( y_i(\alpha) \right)  + h  L_{i,k}(\alpha) d\tilde{\mu}^*_{i,k}(\alpha) 
	= \int I[u_{k+1}] \left( y_i(\alpha) \right)  + h  L_{i,k}(\alpha) d\mu^*_{i,k}(\alpha) .
	\]
    Therefore, there is a selection of $\tilde{\mu}^*_{i,k}$ supported on a subset of $\Lambda_{i,k}$. 
	Similarly, from the identity
    \[
    \int I[\tilde{u}_{k+1}] \left( y_i(\alpha) \right) + h \tilde{L}_{i,k}(\alpha) d\mu^*_{i,k}(\alpha) =
    \int I[\tilde{u}_{k+1}] \left( y_i(\alpha) \right) + h  \tilde{L}_{i,k}(\alpha) d\tilde{\mu}^*_{i,k}(\alpha),
    \]
    then, there is a selection of $\mu^*_{i,k}$ supported on a subset of $\tilde{\Lambda}_{i,k}$.
    Thus, $\tilde{\Lambda}_{i,k} \cap \Lambda_{i,k} \neq \varnothing$, and they have common minimizing measures.
\end{proof}

\subsection{Convergence}
Here, we prove that the limit of a solution of $A_{\rho,h}[w]=0$ is a weak solution to Problem~\ref{pro1}. 
To this end, we first show the consistency of the discretized operator $A^n:=A_{\rho_n,h_n}$.

\begin{lemma}\label{lem:consistency}
    Let Assumption \ref{hyp:H_l_0} and \ref{hyp:compactness} hold.
    Let $(\rho_n, h_n) \to 0$, and $\rho_n^2/h_n  \to 0$, and grid points $(x^n_i,t^n_k) \to (x,t)$, as $n \to \infty$.
    Take $\varphi = \{m, u, \varpi \} \in D$, with compactly supported $m \geq 0$, and  $\varphi^n = \{m^n, u^n, \varpi^n\}$ which take values $ \{m(x^n_i,t^n_k), u(x^n_i,t^n_k), \varpi(t^n_k) \}$ with $\mu^n_{i,k} \in \mathcal{M}[u^n]$.
    Then, 
    \[
		\lim_{n\to\infty}
		 \frac{1}{h_n}
        \Big\langle
		A^n\begin{bmatrix}
			\varphi^n
		\end{bmatrix}
		,
		\begin{bmatrix}
			\varphi^n
		\end{bmatrix}
		\Big\rangle
		=
		\Big\langle
		\mathcal{A}\begin{bmatrix}
			\varphi
		\end{bmatrix}
		,
		\begin{bmatrix}
			\varphi
		\end{bmatrix}
		\Big\rangle .
	\]	
\end{lemma}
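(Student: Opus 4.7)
The plan is to decompose $\frac{1}{h_n}\langle A^n[\varphi^n], \varphi^n\rangle$ into three pieces $T^1_n + T^2_n + T^3_n$ corresponding respectively to the Hamilton--Jacobi, Fokker--Planck, and balance components of $A^n$, paired against $m^n_{i,k}$, $u^n_{i,k+1}$, and $\varpi^n_k$, following the inner-product convention already used in the proof of Lemma~\ref{lem:monotonicity}. Because $m$ is compactly supported and $\varphi$ is smooth, every sum runs over a fixed compact set of indices uniformly in $n$.

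For $T^1_n$, since $\alpha^*_{i,k}$ realizes the infimum in \eqref{eq:HJ_discretization} for the smooth test $u$, the first component of $A^n[\varphi^n]$ equals $-u^n_{i,k}+S_{\rho_n,h_n}(u,i,k)$; Lemma~\ref{lem:disc_prop}(iii) then gives the pointwise limit
\[
\frac{-u^n_{i,k}+S_{\rho_n,h_n}(u,i,k)}{h_n}\longrightarrow u_t(x,t)-H(x,\varpi(t)+u_x(x,t)),
\]
so multiplying by $m^n_{i,k}$ and passing to the Riemann sum $h_n\rho_n\sum_{k,i}$ yields $T^1_n\to\int_0^T\int(u_t-H(x,\varpi+u_x))m\,dx\,dt$.

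For $T^2_n$, apply identity \eqref{eq:beta_I_identity} to rewrite the sum as $\sum_k\rho_n\sum_i[m^n_{i,k+1}u^n_{i,k+1}-m^n_{i,k}\int I[u^n_{k+1}](y_i(\alpha))\,d\mu^*_{i,k}(\alpha)]$. Smoothness of $u$ combined with the interpolation estimate \eqref{eq:Inter_smooth} gives
\[
\int I[u^n_{k+1}](y_i(\alpha))\,d\mu^*_{i,k}(\alpha)=u^n_{i,k+1}+h_n\bar\alpha_{i,k}u_x(x_i,t_{k+1})+O(h_n^2+\rho_n^2),
\]
where $\bar\alpha_{i,k}:=\int\alpha\,d\mu^*_{i,k}(\alpha)$ and the aggregated error is $O(h_n+\rho_n^2/h_n)=o(1)$. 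Together with $m^n_{i,k+1}-m^n_{i,k}=h_n m_t(x_i,t_k)+O(h_n^2)$ and one integration by parts in $x$ (allowed by compact support of $m$), the Riemann sum converges to $\int_0^T\int(m_t-(D_pH(x,\varpi+u_x)m)_x)u\,dx\,dt$. For $T^3_n=-\sum_kh_n\varpi^n_k(\rho_n\sum_i\bar\alpha_{i,k}m^n_{i,k}-Q_k)$, the same $\bar\alpha_{i,k}$-limit together with the Riemann sum in time yields $\int_0^T\varpi(\int D_pH\,m\,dx+Q)\,dt$. Adding the three limits reproduces $\langle\mathcal{A}[\varphi],\varphi\rangle$.

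The principal technical obstacle is to justify $\bar\alpha_{i,k}\to -D_pH(x,\varpi+u_x)$ even when the minimizer set $\Lambda_{i,k}$ is not a singleton: since $\mu^*_{i,k}$ is merely a probability measure on $\Lambda_{i,k}$, $\bar\alpha_{i,k}$ is its barycenter rather than an evaluation at a single point. Lemma~\ref{lem:unique_minimizer} yields a uniform compact support, so one can extract a weak limit along any subsequence; the strict convexity of $l_0$ from Assumption~\ref{hyp:H_l_0}, together with the rate $\rho_n^2/h_n\to 0$ that smooths out the piecewise-linear kinks in $I[u^n_{k+1}]$, forces the limiting objective in $\alpha$ to be strictly convex with unique minimizer $-D_pH(x,\varpi+u_x)$. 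Hence every weak limit of $\mu^*_{i,k}$ must be the Dirac mass at this point and $\bar\alpha_{i,k}$ converges as required, completing the passage to the limit in each $T^j_n$.
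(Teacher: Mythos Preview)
Your proof is correct and follows essentially the same approach as the paper: both decompose the pairing component-wise, invoke Lemma~\ref{lem:disc_prop}(iii) for the Hamilton--Jacobi part, apply identity \eqref{eq:beta_I_identity} together with the interpolation estimate \eqref{eq:Inter_smooth} and a Taylor expansion for the transport part, and treat the balance term as a Riemann sum. Your explicit justification that $\bar\alpha_{i,k}\to -D_pH(x,\varpi+u_x)$ via strict convexity of the limiting objective is in fact more careful than the paper's, which simply asserts this limit by appealing to the feedback formula \eqref{eq:feedback}.
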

\begin{proof}
For the Hamilton-Jacobi equation, we have shown the consistency in Lemma~\ref{lem:disc_prop}.
Now, we will show the consistency for the transport equation:
    \begin{align*}
         & \frac{1}{h_n} \sum_{i\in\mathbb{Z}}\rho_n \Big( 
        m^n_{i, k+1} u^n_{i, k+1} - \sum_{j\in\mathbb{Z}} \int \beta_{i}(x^n_{j} + h_n \alpha) m^n_{j, k}  \, d \mu^n_{j,k}(\alpha) \, u^n_{i, k+1} \Big)  \\
		= & \frac{1}{h_n} \sum_{i\in\mathbb{Z}} \rho_n \Big( 
  m^n_{i, k+1} u^n_{i, k+1} -   m^n_{i, k}  \int I[u^n_{k+1}](x^n_{i} + h_n \alpha)  \, d \mu^n_{i,k}(\alpha)  \Big) \\
  		= & \sum_{i\in\mathbb{Z}}\rho_n \Big( 
  \frac{m^n_{i, k+1} - m^n_{i, k}}{h_n} u^n_{i, k+1} - m^n_{i, k} \int \frac{I[u^n_{k+1}](x^n_i + h_n \alpha) - u^n_{i, k+1}}{h_n}  \, d \mu^n_{i,k}(\alpha) \Big),
    \end{align*}
    where on the second line, we used the identity \eqref{eq:beta_I_identity}. 
    Since $u$ is smooth, from estimate of interpolation function \eqref{eq:Inter_smooth}, we have
    \[
    I[u^n_{k+1}]\left( x^n_i + h_n \alpha \right) = u(x^n_i+h_n \alpha, t^n_{k+1}) + O\left(\rho^2_n \right).
    \]
    Then, the last line can be approximated as:
    \[ \sum_{i\in\mathbb{Z}}\rho_n \left( 
  m_t(x^n_i, t^n_k)  u^n_{i, k+1} - m^n_{i, k} u_x(x^n_i, t^n_{k+1}) \int \alpha \, d \mu^n_{i,k}(\alpha) + O(h_n) +  O\left( \frac{\rho^2_n}{h_n} \right) \right).
    \]
    Passing to the limit, when $n \to \infty$, the last expression converges to
    \[
    \begin{aligned}
    & \int_{\mathbb{R}} m_t(x,t) u(x,t) - \alpha^*(x,t) m(x,t) u_x(x,t) dx \\
    = & \int_{\mathbb{R}} (m_t(x,t) - (D_pH(x,\theta + u_x) m(x,t))_x ) \,u(x,t) dx,
     \end{aligned}
    \] 
    where $\alpha^*(x, t) = \lim_{n \to \infty} \int \alpha \, d\mu^n_{i,k}(\alpha)$, and we used the feedback form \eqref{eq:feedback}.

    The consistency of the discretized balance condition is straightforward.
\end{proof}

\begin{proposition}\label{prop:convergence}
	Suppose Assumptions \ref{hyp:H_l_0}-\ref{hyp:compactness} hold.
	Let $(\rho_n, h_n) \to 0$, and $\rho_n^2/h_n  \to 0$, and grid points $(x^n_i,t^n_k) \to (x,t)$, as $n \to \infty$.
    Let $w^n = \{m^n, u^n, \varpi^n\}$ be a weak solution of operator $A^n:=A_{\rho_n,h_n}$. 
	By $\tilde{w}^n = \{\tilde{m}^n, \tilde{u}^n, \tilde{\varpi}^n\}$ denote a step function that has values $\{m^n_{i,k}, u^n_{i,k}, \varpi^n_{k}\}$ on the domain $[i \rho_n,(i+1)\rho_n]\times[k h_n,(k+1)h_n]$.
	Then there exist subsequences $\tilde{m}^n \rightharpoonup m$ in $\mathcal{P}(\Omega_T)$,  $\tilde{u}^n \rightharpoonup^* u$ in $L^\infty(\Omega_T)$, over compact subsets, and $\tilde{\varpi}^n \rightharpoonup^* \varpi$ in $L^\infty([0,T])$, as $n \to \infty$.
	Moreover $w = \{m, u, \varpi\}$ is a weak solution to Problem~\ref{pro1}.
\end{proposition}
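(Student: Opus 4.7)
The plan is to combine the uniform bounds from Section~\ref{sec:prop_est} with the Minty-type variational inequality \eqref{eq:discrete_weak_solution} and the consistency of Lemma~\ref{lem:consistency} to pass to the continuous weak formulation \eqref{eq:weak_solution}.

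First, I would extract subsequential limits for each component. The uniform Lipschitz bound on $u^n$ in space from Lemma~\ref{lem:disc_estimates}(ii) together with the uniform Lipschitz bound in time from Lemma~\ref{lem:time_lipschitz} and the locally uniform $L^\infty$ bound yield precompactness of the $\tilde u^n$ in $C_{\rm loc}(\Omega_T)$ via Arzelà--Ascoli; this gives a subsequence $\tilde u^n\to u$ locally uniformly and hence $\tilde u^n\rightharpoonup^* u$ in $L^\infty$ on compact sets. The uniform bound on $\tilde\varpi^n$ from Lemma~\ref{lem:disc_estimates}(iii) yields, by Banach--Alaoglu, a subsequence with $\tilde\varpi^n\rightharpoonup^* \varpi$ in $L^\infty([0,T])$. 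For the densities, Lemma~\ref{lem:disc_estimates}(i) secures conservation of mass and Lemma~\ref{lem:compactness} provides uniformly compact support in $x$; Prokhorov's theorem then supplies a subsequence $\tilde m^n\rightharpoonup m$ in $\mathcal{P}(\Omega_T)$ with $m\geq 0$ and compactly supported.

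Next, I would apply the Minty method to identify the limit. Fix any smooth test triplet $\tilde w \in D_+^b$, sample it at the grid nodes to obtain the discrete $\tilde w^n$, and use \eqref{eq:discrete_weak_solution} for $w^n$ with test $\tilde w^n$ to get
\[
\frac{1}{h_n}\langle A^n[\tilde w^n],\,\tilde w^n - w^n\rangle \ge 0.
\]
The diagonal term is handled directly by Lemma~\ref{lem:consistency}: $\frac{1}{h_n}\langle A^n[\tilde w^n],\tilde w^n\rangle \to \langle \mathcal{A}[\tilde w],\tilde w\rangle$. For the cross term $\frac{1}{h_n}\langle A^n[\tilde w^n],w^n\rangle$, I would exploit the smoothness of $\tilde w$: the discretization error in each component of $\frac{1}{h_n}A^n[\tilde w^n]$ is $o(1)$ uniformly on compact sets (the rates are exactly those used inside the proof of Lemma~\ref{lem:consistency}), so that quantity is uniformly close on compacts to the nodal evaluation of the continuous vector $\mathcal{A}[\tilde w]$, a smooth function of $(x,t)$. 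Pairing this uniformly-close smooth integrand against $w^n$, which converges in the senses extracted above, passes to $\langle \mathcal{A}[\tilde w],w\rangle$ (a standard Riemann-sum plus weak-convergence argument). Subtracting the two limits yields $\langle \mathcal{A}[\tilde w],\tilde w - w\rangle \ge 0$ for every $\tilde w\in D_+^b$, which is exactly \eqref{eq:weak_solution}, establishing that $w$ is a weak solution.

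The main obstacle will be controlling the optimization over $\alpha$ hidden in the first component of $A^n[\tilde w^n]$: the weak formulation requires picking a relaxed control $\tilde\mu^{*,n}_{i,k}$ supported on the discrete minimizers, and to take the limit one must link these to the continuous optimal feedback $-D_pH(x,\tilde\varpi + \tilde u_x)$. Strict convexity of $H$ in $p$ (Assumption~\ref{hyp:H_bound}) and the smoothness of $\tilde w$ force this continuous minimizer to be unique, and stability of the argmin under uniform $O(\rho_n^2/h_n)$ perturbations of smooth convex functions confines the discrete minimizers to a vanishing neighborhood of it; consequently the measures $\tilde\mu^{*,n}_{i,k}$ concentrate and the limits of integrals against $\alpha$ and $l_0(\alpha)$ are well defined. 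A secondary bookkeeping issue is the balance component: the explicit prefactor $h_n$ in the third line of \eqref{eq:multivalued} is essential so that, after the double sum in the discrete inner product, one recovers the correct single time-integral pairing of the balance equation against the smooth $\tilde\varpi$, and the weak convergence of $\tilde m^n$ as measures supplies the limiting $x$-integral.
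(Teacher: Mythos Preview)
Your proposal is correct and follows essentially the same Minty-type argument as the paper: extract subsequential limits from the uniform bounds of Lemmas~\ref{lem:disc_estimates}--\ref{lem:compactness}, plug a smooth test triplet sampled on the grid into the discrete variational inequality \eqref{eq:discrete_weak_solution}, and pass to the limit via the consistency Lemma~\ref{lem:consistency}. The paper's proof is terser---it writes the right-hand side directly as $\langle \mathcal{A}[\varphi],\varphi-\tilde w^n\rangle + O(h_n)+O(\rho_n^2/h_n)$ rather than splitting into diagonal and cross terms---and it invokes only Banach--Alaoglu for $\tilde u^n$ (your Arzel\`a--Ascoli route is noted separately in the paper's subsequent remark), but the substance is the same.
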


\begin{proof}
 From Lemma~\ref{lem:disc_estimates}, $\| \tilde{\varpi}^n \|_{L^\infty([0,T])}$ are uniformly bounded, $\|\tilde{u}^n\|_{L^\infty(K^n)}$ uniformly bounded over compact sets, $K^n \subset \Omega_T$, 
 and $\tilde{m}^n $ are probability measures on $\Omega_T$ by its definition, that is, $\|\tilde{m}^n(\cdot, t)\|_{L^1(\mathbb{R})}=1$, for all $t \in [0,T]$.
Furthermore, from Lemma~\ref{lem:compactness} all the functions $\tilde{m}^n$ are compactly supported on $[-R,R]$.
Therefore, by Prokhorov's theorem, there exists a subsequence, still denoted by $\tilde{m}^n$, such that $\tilde{m}^n \rightharpoonup m$ in $\mathcal{P}(\Omega_T)$.
Similarly, by Banach-Alaoglu theorem, there exist subsequences, still denoted by $\tilde{\varpi}^n$ and $\tilde{u}^n$, such that $\tilde{\varpi}^n \rightharpoonup^* \varpi$ in $L^\infty([0,T])$, and  $\tilde{u}^n \rightharpoonup^* u$ in $L^\infty(\Omega_T)$, over compact sets.

	Fix smooth functions $\varphi = \{\mu, v, \theta\} \in D_+^b$, such that $\mu \geq 0$ and $\int_\mathbb{R} \mu(x,t)dx=1$, and $\supp (\mu) \subseteq [-R,R]$.
	Define $\mu^n_{i,k} = \mu(i \rho_n, k h_n)$, $v^n_{i,k} = v(i \rho_n, k h_n)$ and $\theta^n_{k} = \theta(k h_n)$, and denote $\varphi^n = \{\mu^n, v^n, \theta^n\}$.
	Because $w^n$ is a weak solution to $A^n$, we have
	\[
	\begin{aligned}
		0 \leq 
		& \frac{1}{h_n}
        \Big\langle
		A^n\begin{bmatrix}
			\varphi^n
		\end{bmatrix}
		,
		\begin{bmatrix}
			\varphi^n
		\end{bmatrix}
		-
		\begin{bmatrix}
			w^n
		\end{bmatrix}
		\Big\rangle
		\\
		= & 
		\Big\langle
		\mathcal{A}\begin{bmatrix}
			\varphi
		\end{bmatrix}
		,
		\begin{bmatrix}
			\varphi
		\end{bmatrix}
		-
		\begin{bmatrix}
			\tilde{w}^n
		\end{bmatrix}
		\Big\rangle
        + O\left( h_n \right) + O\left( \frac{\rho_n^2}{h_n}\right).
	\end{aligned}
	\]	

	Then, using Lemma~\ref{lem:consistency} and passing to the limit, when $n \to \infty$ in this expression, yields
    \[
    \Big\langle
		\mathcal{A}\begin{bmatrix}
			\varphi
		\end{bmatrix}
		,
		\begin{bmatrix}
			\varphi
		\end{bmatrix}
		-
		\begin{bmatrix}
			w
		\end{bmatrix}
		\Big\rangle
        \geq 0
    \]
    where $w=\{ m, u, \varpi\}$.
\end{proof}

\begin{remark}
    In view of the Lipschitz estimates in space and time, Lemma~\ref{lem:disc_estimates} and Lemma~\ref{lem:time_lipschitz}, we can also extract a subsequence
$\tilde{u}^n$ that converges uniformly. 
If the limit is unique, in the convex case, for example, the whole sequence converges uniformly (see \cite{gomes2018mean} for a discussion).
\end{remark}

\section{Numerical results}\label{sec:numerical_results}

This numerical study aims to validate the accuracy and flexibility of our proposed Semi-Lagrangian scheme using a mean-field game with quadratic cost and a fluctuating supply function as a benchmark.
To evaluate the accuracy of our method, we use the semi-explicit solutions proposed in \cite{gomes2018mean}. 
Then, we compare our scheme with other numerical methods: the variational method using a potential transformation given in~\cite{ashrafyan2022variational} and a recurrent neural network method given in~\cite{ashrafyan2022potential}.

In our tests, we consider the terminal time $T=1$ and numerical space domain $[a,b]=[-1,1]$.
We divide $[a,b]$ spatial domain and $[0,T]$ time interval with $M$ and $N$ equal parts, respectively.
In our code, for interpolation basis functions we take $\mathbb{P}_1$-bases  $\{\beta_i\}_{i=1}^M$, defined by \eqref{eq:beta_i}. 

The minimization problem for the value function, \eqref{eq:HJ_discretization}, is solved using the \texttt{FindMin} native function in Mathematica software.
Suppose the point $y_i( \alpha_{i,k}^{*})$  is outside of the interval $[a,b]$; for example, it exits the interval from the right edge. In this case,  we update the time discretization parameter $h$, only for that node, such that $y_i( \alpha_{i,k}^{*}) = b$, and at the next time step, we restore the original $h$.

\subsection{Numerical Test 1}
Consider the quadratic cost
\[
l_0(\alpha) = \frac{c|\alpha|^2}{2}, \quad \mbox{and \;} V(x) =\frac{\eta}{2} (x - \tau)^2, 
\]
where $\tau\in \mathbb{R}$ and $\eta\geq 0$.
In this case, the Hamiltonian, $H$, is 
\[
H(x,\varpi+u_x)=\frac{\left|\varpi(t) + u_x(x,t)\right|^2}{2 c} - V(x),
\] 
leading to the following system 
\begin{equation}\label{eq:MFG_quadratic}
	\begin{cases}
		-u_t(x,t) + \frac{\left|\varpi(t) + u_x(x,t)\right|^2}{2 c}- V(x)=0, 
		\\
		m_t(x,t) - \big((\varpi(t) + u_x(x,t))m(x,t)\big)_x =0,
		\\
		-\int_{\mathbb{R}} (\varpi(t) + u_x(x,t))m(x,t) dx = Q(t),
	\end{cases}
\end{equation}
subject to the initial-terminal conditions \eqref{eq:initial_terminal}.

\subsubsection{Analytical solution}
For the system \eqref{eq:MFG_quadratic}, a semi-explicit solution is given in \cite{gomes2018mean}.
The solution $u(x,t)$ is a quadratic function in $x$ with time-dependent coefficients:
\[
u(t,x)=a_0(t) + a_1(t)x + a_2(t)x^2, \quad \text{for} \quad t\in[0,T],~ x \in \mathbb{R}.
\]
These coefficients are solutions to a Riccati-like system of ordinary differential equations.
The explicit price formula is given by
\[
\varpi(t)=\eta \left(\tau-\int_{\mathbb{R}} x \bar{m}(x) \, dx\right)(T-t)- \eta \int_t^T \int_0^s Q(r) \, dr \, ds - c Q(t),
\]
for $t\in[0,T]$, and the transport vector-field $b(t,x)$ is defined as
\[
b(t,x) = - \frac{1}{c} (\varpi(t) + a_1(t) + 2a_2(t)x), \quad \text{for} \quad t\in[0,T],~ x \in \mathbb{R},
\]
which is used to compute the distribution $m$ using the method of characteristics. 

\subsubsection{Fully discrete SL scheme}

Set $c = 1, \eta = 1$ and $\tau = 1/4$, then 
\[
V(x) =\frac{1}{2} \left(x - \frac{1}{4}\right)^2.
\]
Take $\bar{u}(x) \equiv 0$ and
\begin{equation}\label{eq:bar_m}
\bar{m}(x) = \frac{\hat{m}(x)}{\int_{-1}^1 \hat{m}(x) dx}, \, \mbox{where} \;
\hat{m}(x) = \begin{cases} 
	\exp\left(-\frac{1}{1-(1.1x)^2}\right), & |x|<1, \\
	0, & \mbox{otherwise}.
\end{cases}
\end{equation}
Finally, the supply function is the solution to the differential equation
\begin{equation}\label{eq:supply}
\begin{cases}
	\dot{Q}(t) = \overline{Q}(t) - \xi Q(t), & t \in [0,T], \\
	Q(0) = q_0,
\end{cases}
\end{equation}
where, as in \cite{ashrafyan2022variational}, we also choose  $\overline{Q}(t) = 5 \sin(3 \pi t)$, which denotes the average supply over time, $\xi = 4$ is the mean-reversion rate, and $q_0=-0.5$ is the initial supply level. 
These oscillations simulate market seasonality, allowing us to examine the impact of fluctuating supply on price variations. 

The problem data are set, and now we are ready to modify and solve our algorithm for the system \eqref{eq:MFG_quadratic}.

\textbf{The algorithm.}

\begin{enumerate}
	\item[0)]
	For $q=0$, take an initial guess for the price, $\varpi^q$ (in our code, we use $\varpi^0=-Q$), and let  $\varepsilon>0$ be the desired convergence tolerance for our algorithm.
	
	\item[1)]   For $k=N-1, N-2, \ldots, 0$ and  $i=0,1,\ldots,M$, compute the value function, $u$, using the discretization of the Hamilton-Jacobi equation:
	\begin{equation*}
		\begin{cases}
			\displaystyle u^q_{i,k} = \inf_{\alpha \in\mathbb{R}} \left\{  I[u^q_{k+1}](y_i(\alpha)) + h \, (l_0(\alpha)  + \varpi^q_k \alpha + V_i) \right\}, \\
			u^q_{i,N} = \bar{u}_i,
		\end{cases}
	\end{equation*}
    and find optimal controls:
	\[
	\alpha_{i,k}^{*q}= \arginf_{\alpha \in\mathbb{R}} \left\{ I[u^q_{k+1}](y_i(\alpha)) + h \, (l_0(\alpha)  + \varpi^q_k \alpha + V_i) \right\}.
	\]
	
	\item[2)] for $k=0,1,\ldots,N-1$ and $i=0,1,\ldots,M$,compute the distribution function, $m$, using the discretization of the transport equation:
	\begin{equation*}
		\begin{cases}
			\displaystyle m^q_{i,k+1}=\sum_{j=0}^M \beta_i \left(y_j( \alpha_{j,k}^{*}) \right) m^q_{j,k},\\
			m^q_{i,0} = \bar{m}_i,
		\end{cases}
	\end{equation*}
	
	\item[3)] for $k=0,1,\ldots,N-1$ and $p \to p+1$, update the price function, $\varpi^p$, using the discretization of the balance condition:
	\begin{equation*}
		\varpi^{q+1}_k = \varpi^{q}_k + \sum_{i=0}^M \alpha_{i,k}^{*q} \ m^q_{i,k} \ \Delta x - Q_k,
	\end{equation*}
	
	\item[4)] Check the convergence of the price using the
	stopping criterion: 
	\[
	\| \varpi^{q+1} - \varpi^q \|_{\infty} < \varepsilon,
	\]
	if satisfied stop, otherwise, set $\varpi^q := \varpi^{q+1} $, and repeat steps $1)-4)$.
\end{enumerate}

Note that the Hamiltonian is quadratic in our particular example, which makes the price function update rule explicit.

\subsubsection{Approximations and Comparisons}

The approximated solutions for $m, u$, and $\varpi$ with the discretization parameters $\rho = h = 0.005$, and the tolerance parameter $\varepsilon = 0.004$ are shown in Figures~\ref{fig:u_w_Q} and \ref{fig:m_m_evol}.

\begin{figure}[htp]
	\centering
	\begin{subfigure}[t]{0.25\textwidth}
		\vskip0cm
		\centering        
		\includegraphics[width=\textwidth]{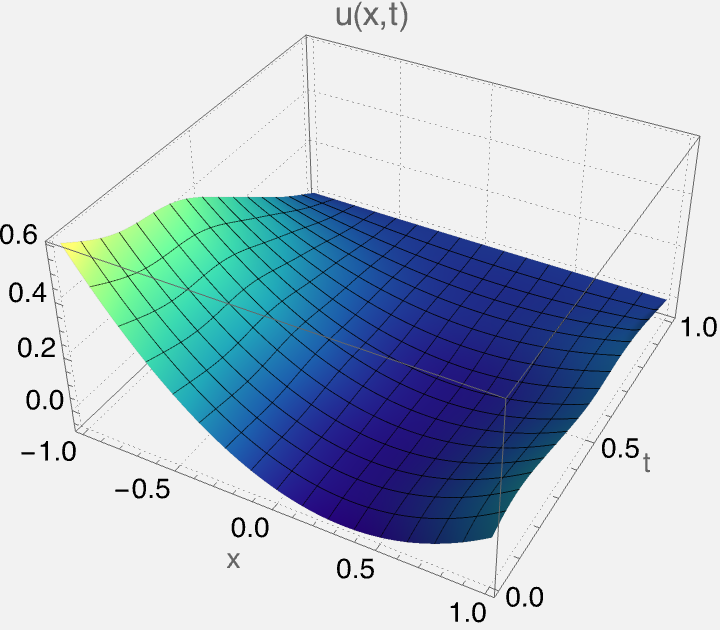}
	\end{subfigure}
 \hspace{15mm}
	\begin{subfigure}[t]{0.35\textwidth}
		\vskip0cm         
		\centering
		\includegraphics[width=\textwidth]{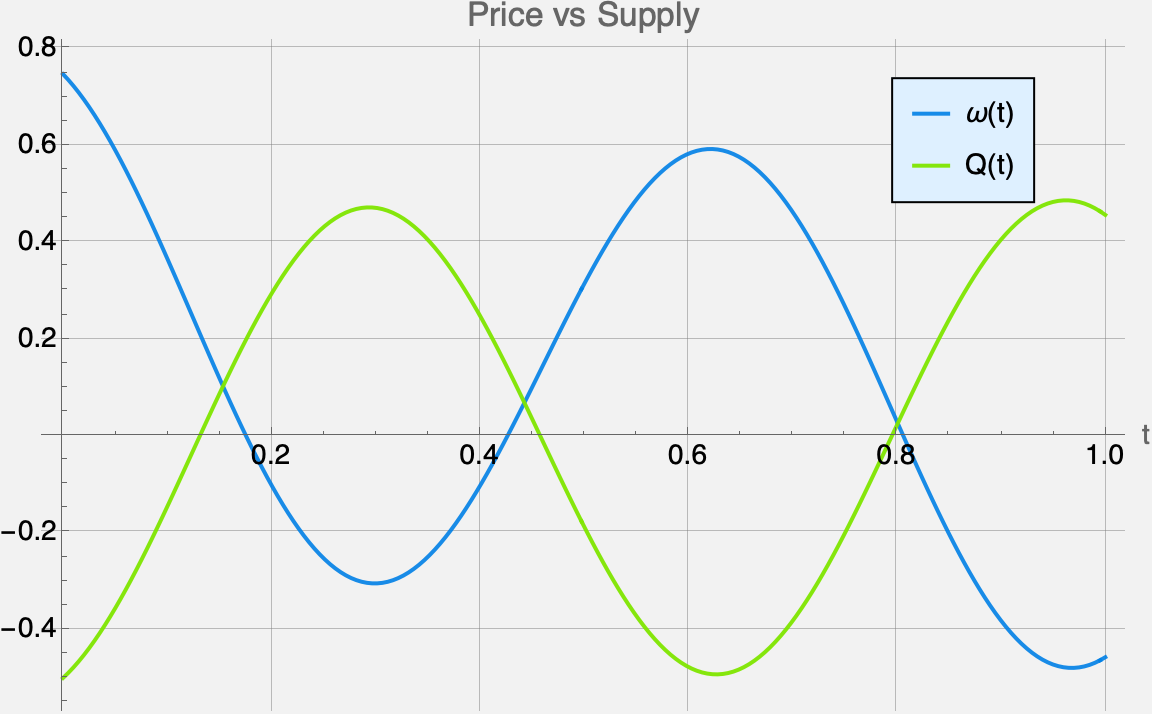}       
	\end{subfigure}
	\caption{Numerical solution of $u$ (left), and the relation of price $\varpi$ and supply functions, $Q$ (right).}
	\label{fig:u_w_Q}
\end{figure}

\begin{figure}[htp]
	\centering
	\begin{subfigure}[t]{0.25\textwidth}
		\vskip.2cm         
		\centering        
		\includegraphics[width=\textwidth]{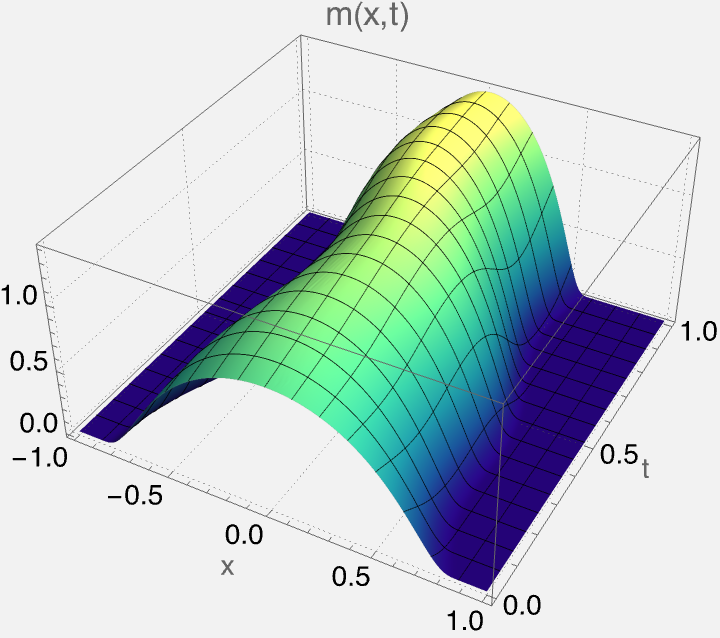}
	\end{subfigure}     
	\hspace{15mm}
	\begin{subfigure}[t]{0.25\textwidth}
		\vskip0cm
		\centering        
		\includegraphics[width=\textwidth]{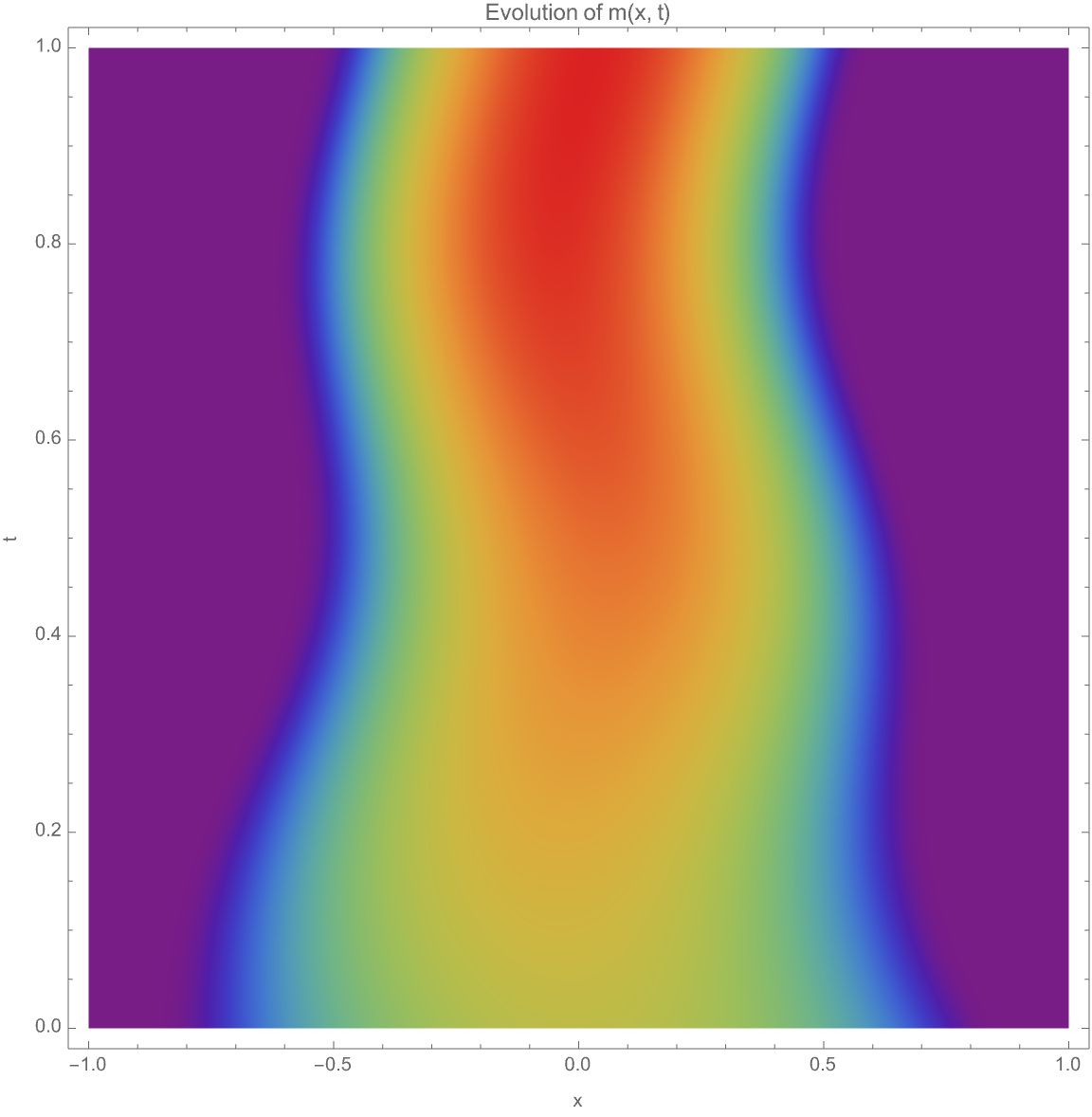}
	\end{subfigure}
	\caption{Numerical solution of $m$ (left) and its density plot (right).}
	\label{fig:m_m_evol}
\end{figure}

Figures~\ref{fig:werror}, \ref{fig:merror}, and \ref{fig:uerror} compare numerical solutions to analytical ones and illustrate their errors.

\begin{figure}[htp]
	\centering
	\vskip0cm         
	\centering        
	\includegraphics[width=.8\textwidth]{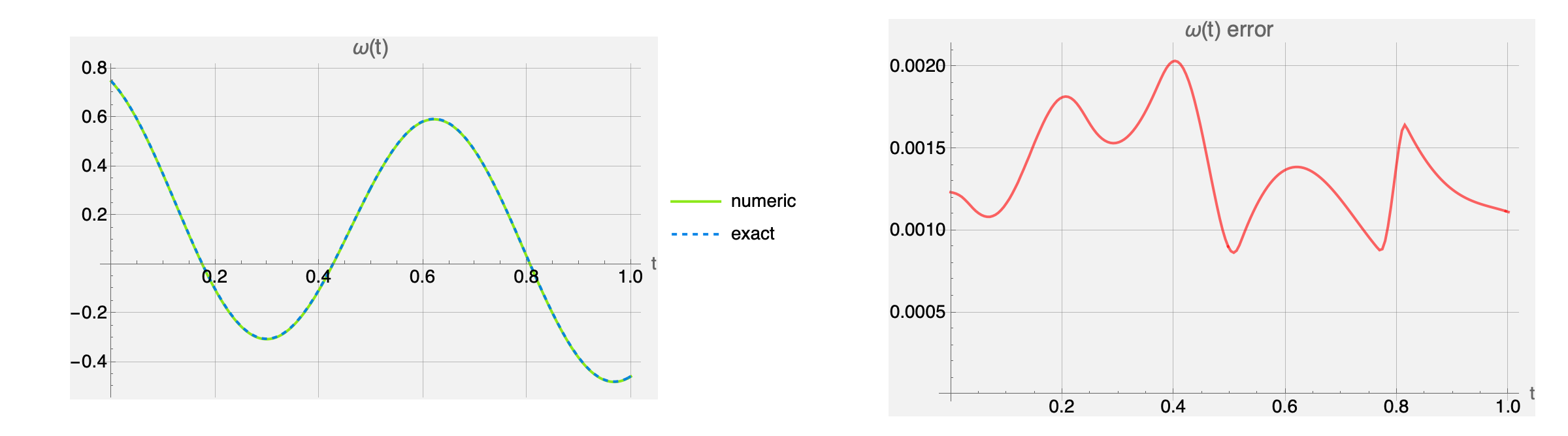}
	\caption{Approximated and exact solutions for $\varpi$ (left), and their absolute difference (right).}
	\label{fig:werror}
\end{figure} 

\begin{figure}[htp]
	\centering
	\vskip0cm         
	\centering        
	\includegraphics[width=.8\textwidth]{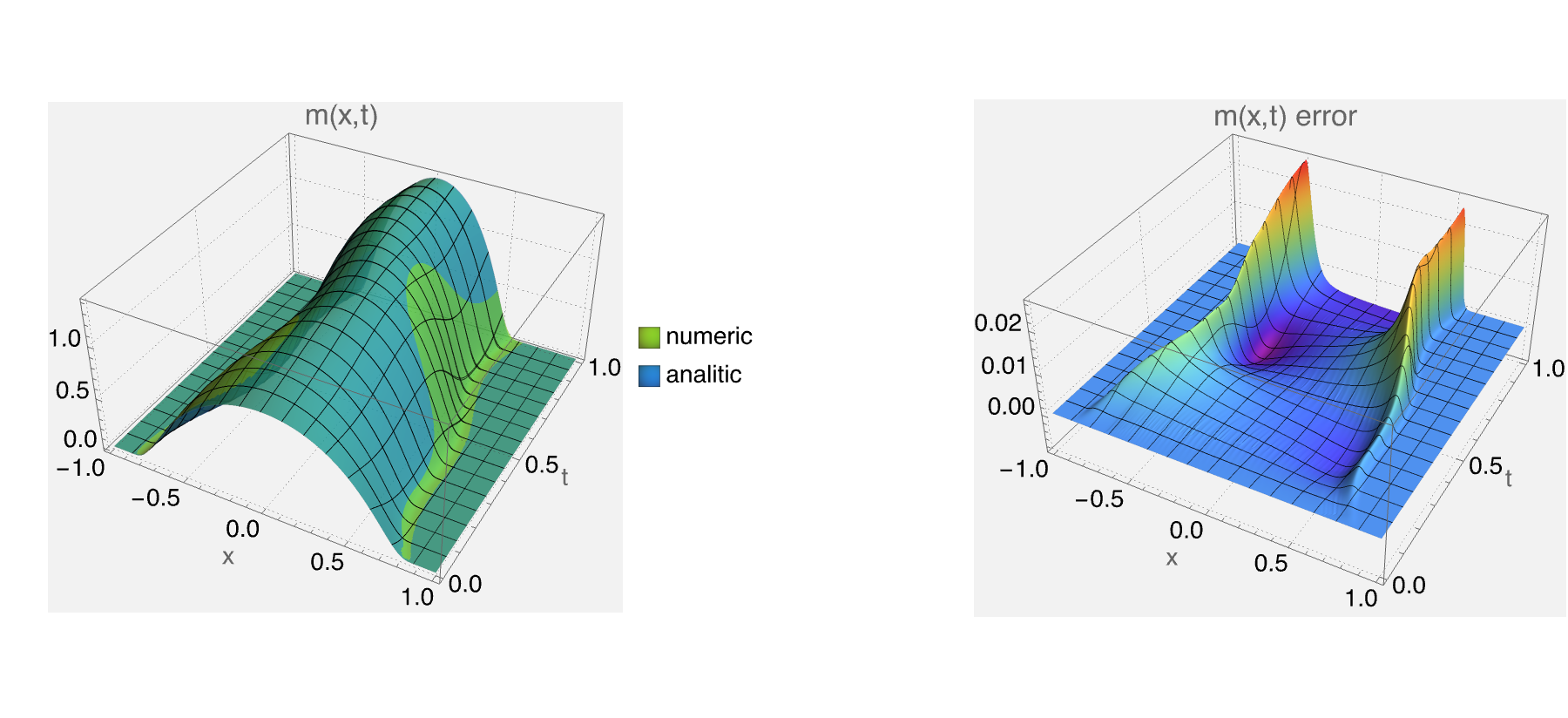}
	\caption{Approximated and exact solutions for $m$ (left), and their difference (right).  Note that they only have a major disagreement on the boundary of the support.}
	\label{fig:merror}
\end{figure} 

\begin{figure}[htp]
	\centering
	\vskip0cm         
	\centering        
	\includegraphics[width=.8\textwidth]{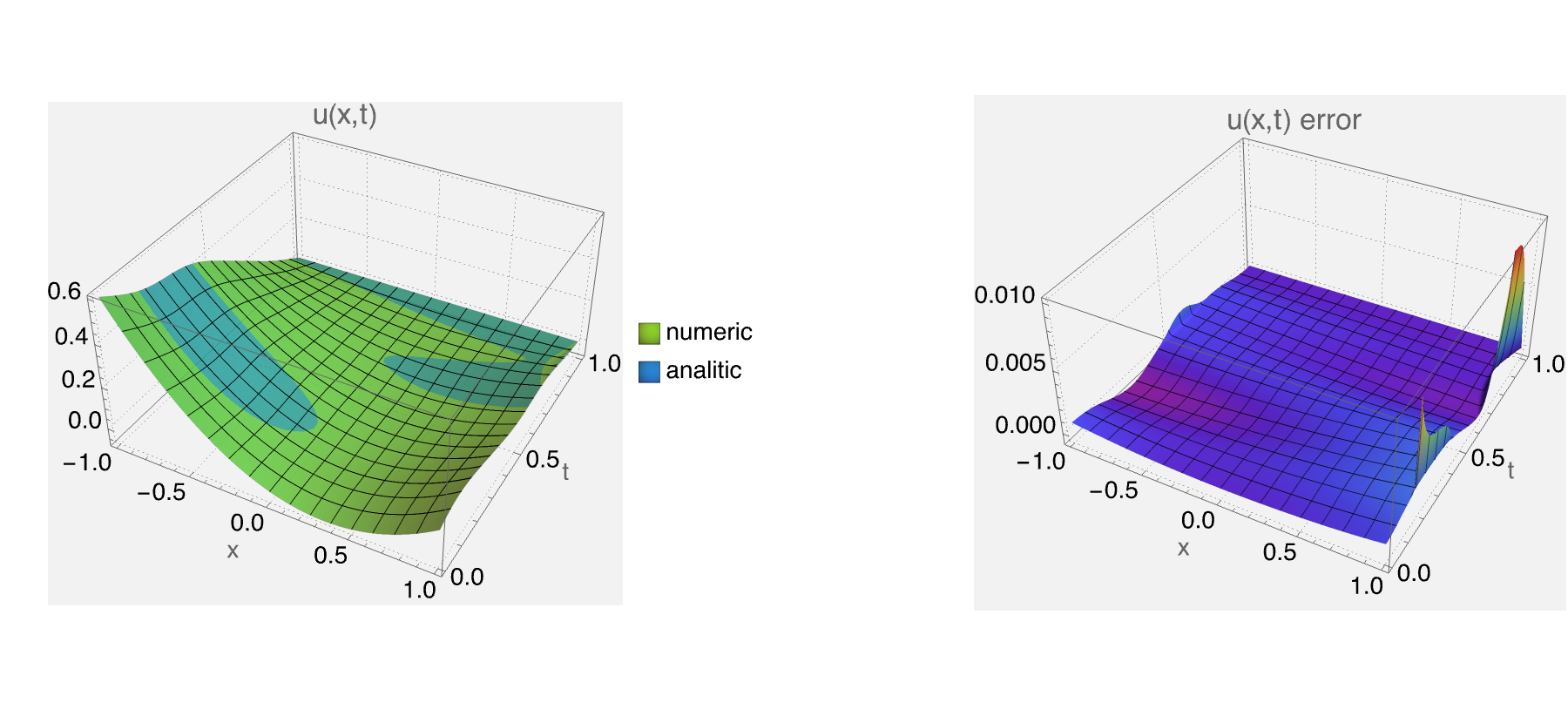}
	\caption{Approximated and exact solutions for $u$ (left), and their difference (right).}
	\label{fig:uerror}
\end{figure} 

Table~\ref{table:comparison} presents relative error rates for various grid sizes, with all cases converging within 4 iterations (one iteration is the calculation of steps $1)-4)$ of the algorithm), for the tolerance parameter $\varepsilon=0.004$. 
All the relative error rates are calculated using $l_\infty$ norms. For the value function $u$ we compute it at time $t=0$, and for the density function $m$ at $t=T$.

\begin{table}[ht]
	\centering
	\begin{tabular}{|c|c|c|c|c|}
		\hline
		$\rho$ & $h$  & err. $\varpi$ & err. $u$ & err $m$ \\ \hline
		$2.0 \cdot 10^{-2}$& $4.0 \cdot 10^{-2}$& $1.2 \cdot 10^{-3}$& $2.5 \cdot 10^{-2}$& $8.8 \cdot 10^{-2}$\\ \hline
		$1.0 \cdot 10^{-2}$ & $2.0 \cdot 10^{-2}$& $6.0 \cdot 10^{-3}$& $1.1 \cdot 10^{-2}$& $4.2 \cdot 10^{-2}$\\ \hline
  		$5.0 \cdot 10^{-3}$ & $1.0 \cdot 10^{-2}$& $3.3 \cdot 10^{-3}$& $4.7 \cdot 10^{-3}$& $1.8 \cdot 10^{-2}$\\ \hline
  		$2.5 \cdot 10^{-3}$ & $5.0 \cdot 10^{-3}$& $2.1 \cdot 10^{-3}$ & $1.2 \cdot 10^{-3}$& $6.7 \cdot 10^{-3}$\\ \hline
	\end{tabular}
	\caption{Comparing relative errors on different grids, for tolerance parameter $\varepsilon = 0.004$.}
	\label{table:comparison}
\end{table}

Figure~\ref{fig:total_error} visually illustrates how the total relative error decreases as $\rho, h \to 0$. 
The figure is based on the space nodes, with the relation of grid parameters $\rho = 2 h$, as in Table~\ref{table:comparison},

\begin{figure}[htp]
	\centering
	\vskip0cm         
	\centering        
	\includegraphics[width=.6\textwidth]{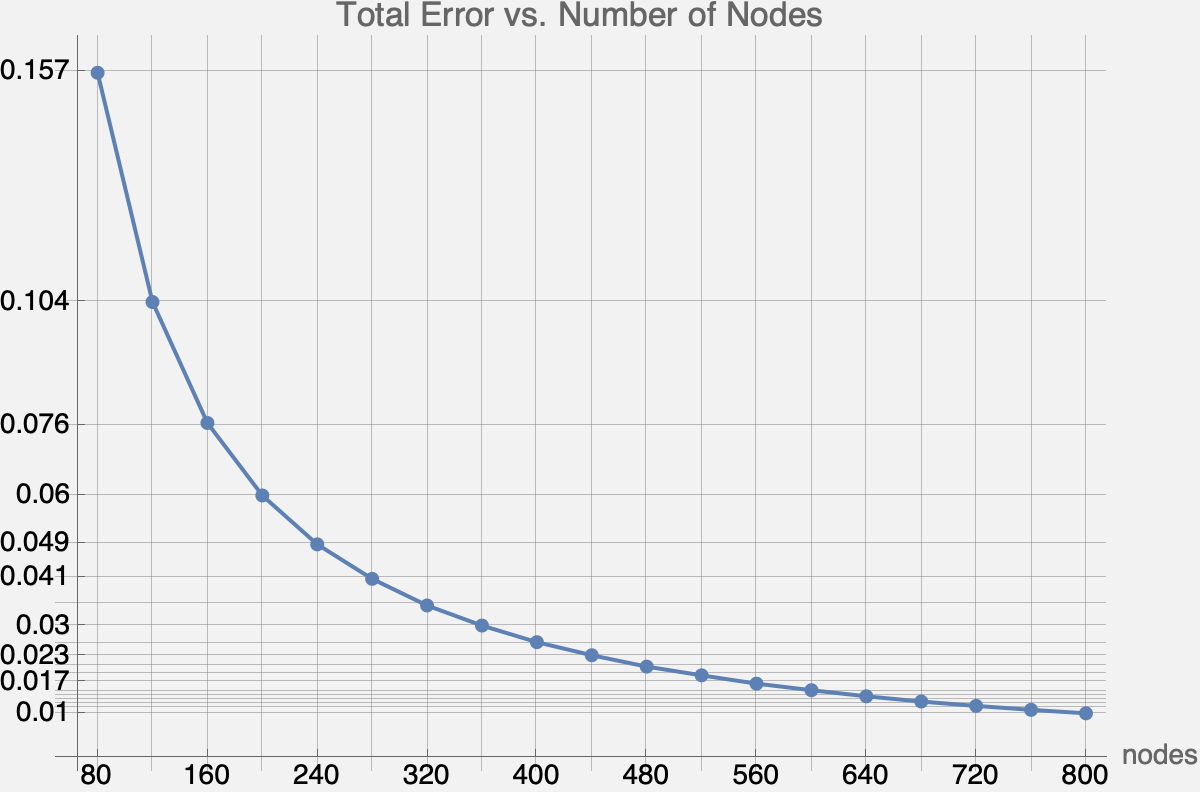}
	\caption{Visual representation of the total relative error as a function of the number of nodes as $\rho, h \to 0$, and a tolerance parameter set to $\varepsilon = 0.004$.}
	\label{fig:total_error}
\end{figure} 

We also run our scheme with the same discretization parameters $\rho=0.05$ and $h=0.05$ and the same problem data as in~\cite{ashrafyan2022potential}, where authors used the variational method.  
Our scheme outperforms the variational method in accuracy for $\varpi$, $u$, and $m$ by approximately 10, 8, and 1.5 times.
Additionally, our scheme reduces the computation time by a factor of 20 compared to the variational method.
It outperformed the variational method at every tested grid size, showing greater accuracy and faster convergence.

We also compared our scheme to the recurrent neural network method with the same problem data as given in~\cite{ashrafyan2022variational}. 
With the discretization parameters $\rho=0.01$ and $h=0.01$,
our scheme outperforms the recurrent neural network method in accuracy for $\varpi$, $u$, and $m$ by approximately
and 15, 10, and 3 times, respectively.
In terms of computational time, our scheme achieves the same level of accuracy in a matter of seconds, while the RNN requires several minutes to reach similar results.

We should stress, however, that the machine learning code was partially developed to address common noise problems that our method does not cover.

\subsection{Numerical Test 2}

Consider the following setup with a convex cost function, and a linear potential and terminal functions:
\[
l_0(\alpha) = \frac{3|\alpha|^{4/3}}{4}, \quad V(x) = \tau + \eta x, \quad \bar{u}(x) = \bar{a}_0 + \bar{a}_1 x.
\]
In this case, the Hamiltonian, \( H \), is a fourth-order function given by
\[
H(x,\varpi+u_x)=\frac{\left|\varpi(t) + u_x(x,t)\right|^4}{4} - V(x),
\]
which leads to the following system:
\[
\begin{cases}
& -u_t(x,t) + \frac{|u_x(x,t) + \varpi(t)|^4}{4} - V(x) = 0, \\
& m_t(x,t) - \left( (\varpi(t) + u_x(x,t))^3 m(x,t) \right)_x = 0, \\
& -\int_{\mathbb{R}} (\varpi(t) + u_x(x,t))^3 m(x,t) \, dx = Q(t),
\end{cases}
\]
subject to initial-terminal conditions \eqref{eq:initial_terminal}. 

\subsubsection{Analytical solution}
The solution, \( u(x,t) \), is a linear function in $x$ with time-dependent coefficients:
\[
u(x,t) = a_0(t) + a_1(t) x.
\] 
Moreover, from the balance condition, we gather
\[
 -(\varpi(t) + a_1(t))^3  = Q(t).
\]
Accordingly, we get
\[
\begin{cases}
& -a_0'(t) - a_1'(t) x + \frac{Q^{4/3(t)}}{4} = \tau + \eta x, \\
& a_0(T) + a_1(T) x = \bar{a}_0 + \bar{a}_1 x.
\end{cases}
\]

Using the terminal condition, $u(x, T) = \bar{a}_0 + \bar{a}_1 x$, we find explicit formulas for the price function, \( \varpi(t) \), and the value function, \( u(x,t) \), expressed as:
\[
\varpi(t) = -Q(t)^{1/3}  - (T - t) \eta - \bar{a}_1,
\]
and
\[
u(x,t) = - \int_t^T \frac{Q(s)^{4/3}}{4} ds + (T - t)V(x) + \bar{u}(x).
\]

Furthermore, 
we have
\[
\begin{cases}
& m_t(x,t) + Q(t) m_x(x,t) = 0, \\
& m(x,0) = \bar{m}(x), 
\end{cases}
\]
which can be solved explicitly:
\[
m(x,t) = \bar{m}\left( x - \int_0^t Q(s) ds \right).
\]

\subsubsection{Fully discrete SL scheme}

Set \( \eta = 1 \), \( \tau = 0 \), \( \bar{a}_0 = 0 \), and \( \bar{a}_1 = 0 \). 
Then $V(x) = x$, and  $\bar{u}(x) \equiv 0$. 
We take the  initial density function to be:
\[
\bar{m}(x) = \frac{\hat{m}(x)}{\int_{-1}^1 \hat{m}(x) \, dx}, \quad \text{with} \quad \hat{m}(x) = 
\begin{cases} 
\exp\left(-\dfrac{1}{1 - (1.2x)^2}\right), & |x| < 1, \\ 
0, & \text{otherwise}, 
\end{cases}
\]
and the same supply function, \( Q(t) \), as defined in  \eqref{eq:supply}.

For this setup, only step 3) of the semi-Lagrangian scheme requires modification due to the higher-order nature of the Hamiltonian. Specifically, since $D_p H(p) = p^3$, the price update rule is  given in an implicit form: 
\[
    \sum_{i=0}^N \left( \varpi^{q+1}_k - \varpi^q_k + (- \alpha_{i,k}^{*q})^{1/3} \right)^3 m^q_{i,k} \Delta x = -Q_k,
\]
for $k = 0,1, \ldots,N-1$.

\subsubsection{Approximations and Comparisons}

The approximated solutions for $m, u$, and $\varpi$ with the discretization parameters $\rho = h = 0.005$, and the tolerance parameter $\varepsilon = 0.0002$ are shown in Figure~\ref{fig:2m_u_w_Q}.

\begin{figure}[htp]
	\centering
    \begin{subfigure}[t]{0.25\textwidth}
		\vskip.0cm         
		\centering        
		\includegraphics[width=\textwidth]{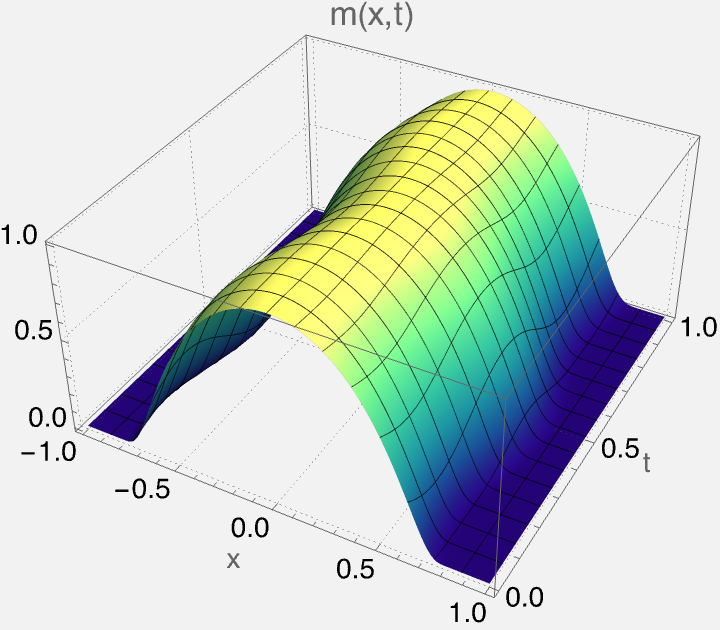}
	\end{subfigure}     
	\hspace{5mm}
	\begin{subfigure}[t]{0.25\textwidth}
		\vskip0cm
		\centering        
		\includegraphics[width=\textwidth]{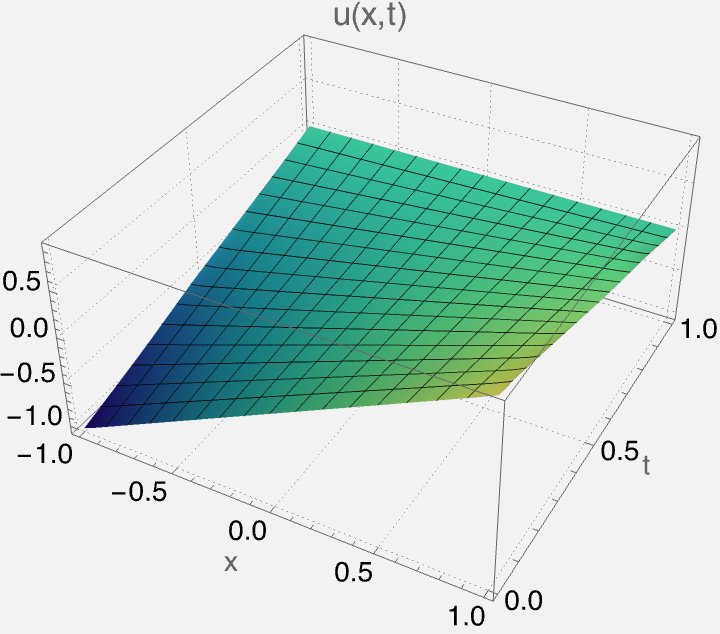}
	\end{subfigure}
 \hspace{5mm}
	\begin{subfigure}[t]{0.35\textwidth}
		\vskip0cm         
		\centering
		\includegraphics[width=\textwidth]{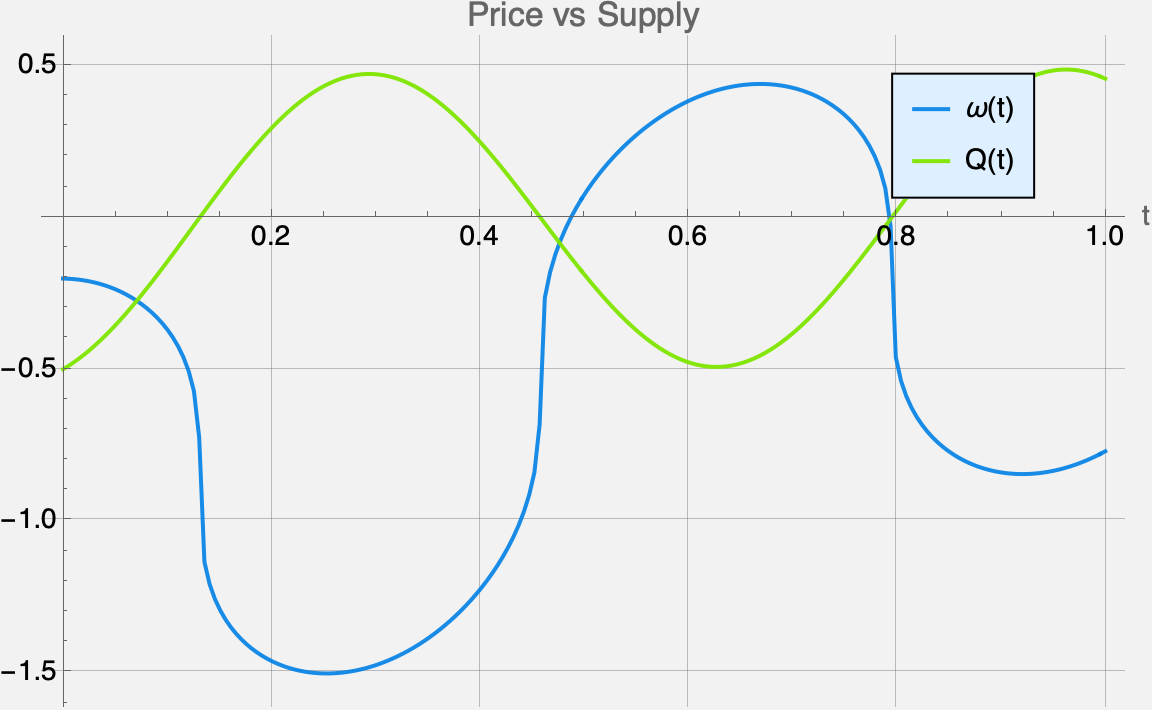}       
	\end{subfigure}
	\caption{Numerical solution of $m$, $u$, and the relation of price $\varpi$ and supply functions, $Q$.}
	\label{fig:2m_u_w_Q}
\end{figure}

Figures~\ref{fig:2werror}, \ref{fig:2merror}, and \ref{fig:2uerror} compare numerical solutions to analytical ones and illustrate their errors.

\begin{figure}[htp]
	\centering
	\vskip0cm         
	\centering        
	\includegraphics[width=.8\textwidth]{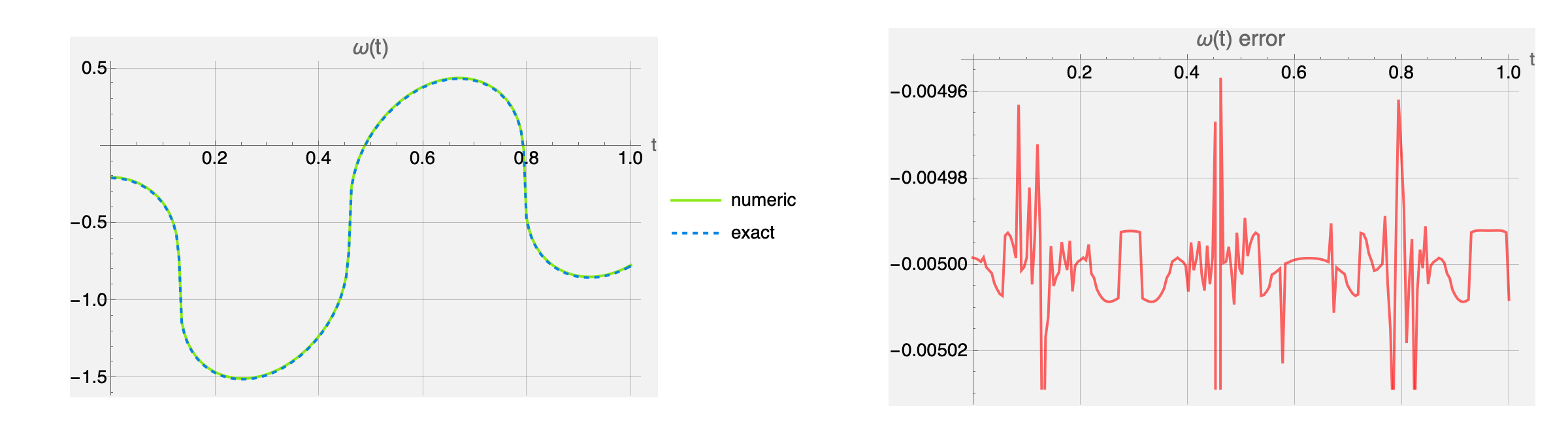}
	\caption{Approximated and exact solutions for $\varpi$ (left), and their absolute difference (right).}
	\label{fig:2werror}
\end{figure} 

\begin{figure}[htp]
\centering
	\begin{subfigure}[t]{0.25\textwidth}
		\vskip.0cm         
		\centering        
		\includegraphics[width=\textwidth]{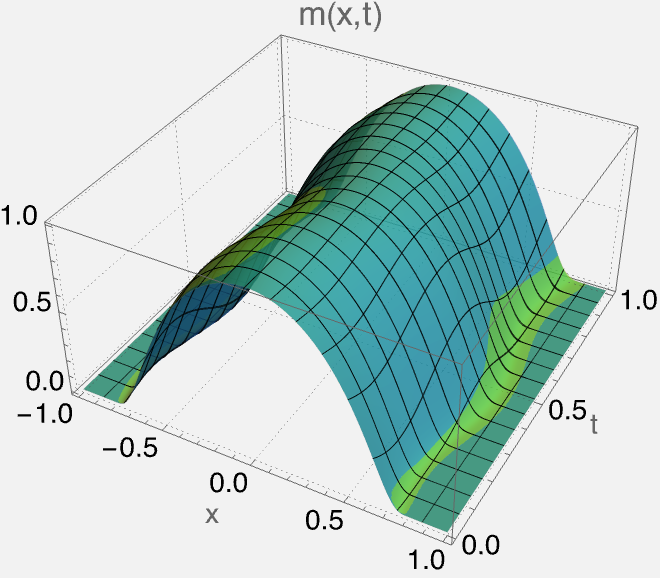}
	\end{subfigure}     
	\hspace{15mm}
	\begin{subfigure}[t]{0.27\textwidth}
		\vskip0cm
		\centering        
		\includegraphics[width=\textwidth]{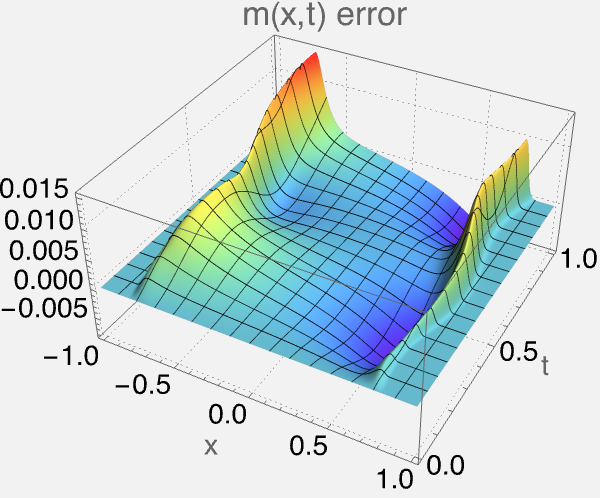}
	\end{subfigure}
	\caption{Approximated and exact solutions for $m$ (left), and their difference (right).  Note that they only have a major disagreement on the boundary of the support.}
	\label{fig:2merror}
\end{figure} 

\begin{figure}[htp]
	\centering
    \begin{subfigure}[t]{0.25\textwidth}
		\vskip.0cm         
		\centering        
		\includegraphics[width=\textwidth]{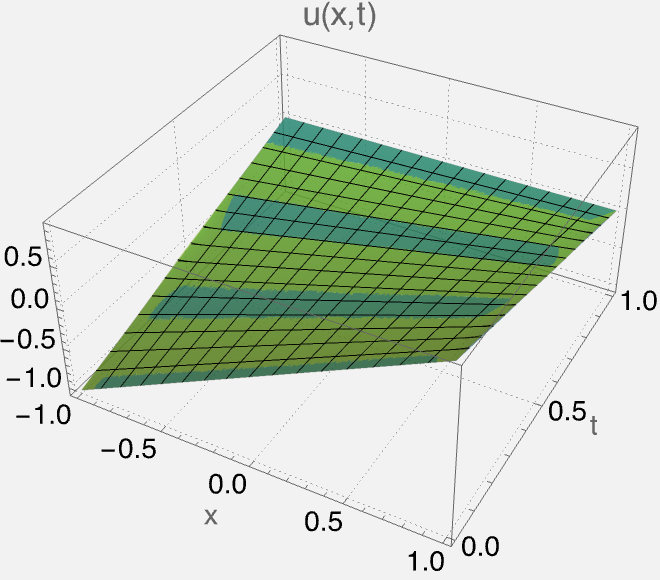}
	\end{subfigure}     
	\hspace{15mm}
    \begin{subfigure}[t]{0.27\textwidth}
	\vskip0cm         
     \centering   
	\includegraphics[width=\textwidth]{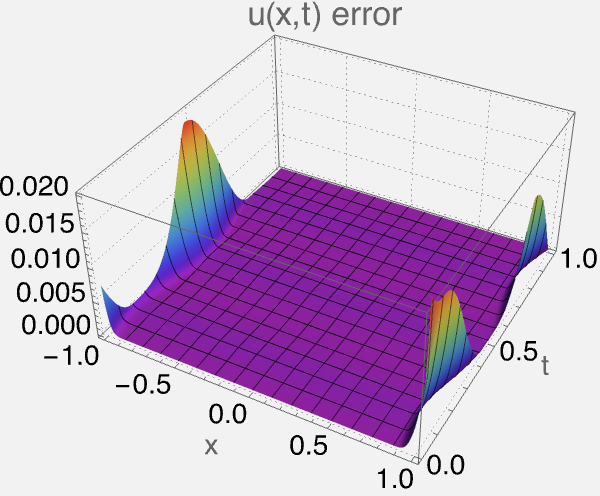}
    \end{subfigure} 
	\caption{Approximated and exact solutions for $u$ (left), and their difference (right).}
	\label{fig:2uerror}
\end{figure} 

Table~\ref{table:2comparison} presents relative error rates for various grid sizes, with all cases converging within 3 iterations (one iteration is the calculation of steps $1)-4)$ of the algorithm), for the tolerance parameter $\varepsilon=0.0002$. 
All the relative error rates are calculated using $l_\infty$ norms. For the value function $u$ we compute it at time $t=0$, and for the density function $m$ at $t=T$.

\begin{table}[ht]
	\centering
	\begin{tabular}{|c|c|c|c|c|}
		\hline
		$\rho$ & $h$  & err. $\varpi$ & err. $u$ & err $m$ \\ \hline
		$2.0 \cdot 10^{-2}$& $4.0 \cdot 10^{-2}$& $2.6 \cdot 10^{-2}$& $8.5 \cdot 10^{-3}$& $5.3 \cdot 10^{-2}$\\ \hline
		$1.0 \cdot 10^{-2}$ & $2.0 \cdot 10^{-2}$& $1.3 \cdot 10^{-2}$& $7.4 \cdot 10^{-3}$& $2.6 \cdot 10^{-2}$\\ \hline
  		$5.0 \cdot 10^{-3}$ & $1.0 \cdot 10^{-2}$& $6.7 \cdot 10^{-3}$& $6.8 \cdot 10^{-3}$& $1.3 \cdot 10^{-2}$\\ \hline
  		$2.5 \cdot 10^{-3}$ & $5.0 \cdot 10^{-3}$& $3.9 \cdot 10^{-3}$ & $6.5 \cdot 10^{-3}$& $6.6 \cdot 10^{-3}$\\ \hline
	\end{tabular}
	\caption{Comparing relative errors on different grids, for tolerance parameter $\varepsilon = 0.0002$.}
	\label{table:2comparison}
\end{table}

Figure~\ref{fig:2total_error} visually illustrates how the total relative error decreases as $\rho, h \to 0$. 
The figure is based on the space nodes, with the relation of grid parameters $\rho = 2 h$.

\begin{figure}[htp]
	\centering
	\vskip0cm         
	\centering        
	\includegraphics[width=.6\textwidth]{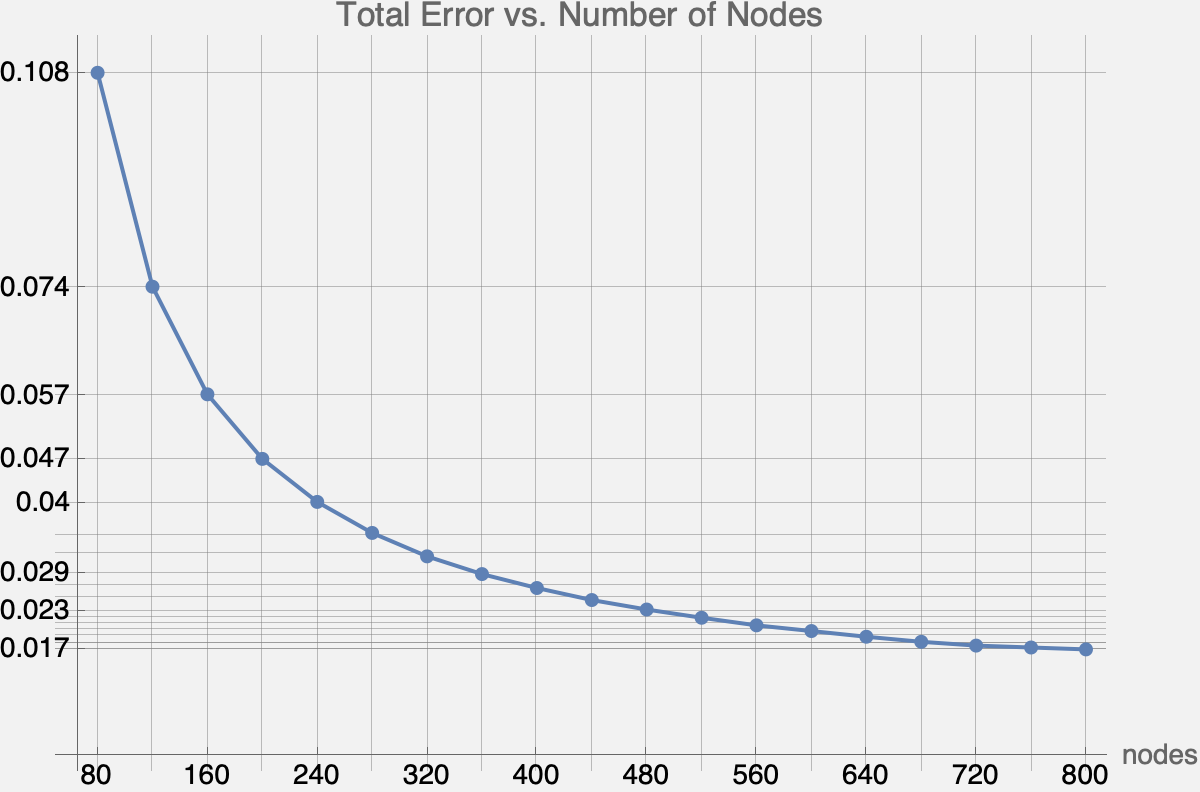}
	\caption{Visual representation of the total relative error as a function of the number of nodes as $\rho, h \to 0$, and a tolerance parameter set to $\varepsilon = 0.002$.}
	\label{fig:2total_error}
\end{figure}

For comparison, we also implemented the variational method presented in \cite{ashrafyan2022potential} for the second numerical test.
Our semi-Lagrangian scheme consistently outperformed the variational approach, converging more rapidly across all tested grid sizes and achieving an accuracy of roughly an order of magnitude higher.
It is worth noting that the variational method failed to converge when the grid size exceeded one thousand points.
A possible explanation is that the variational method requires derivatives of the potential function as well as a large number of constraints that need to be satisfied simultaneously, complicating its computational efficiency.
Furthermore, it requires up to second-order derivatives to reconstruct the problem's solutions.

\section*{Declarations}

\paragraph{\bf Ethical Statements:} This work considers mathematical models. As such, no human or animal studies were conducted.

\paragraph{\bf Competing interests:} Not applicable.

\paragraph{\bf Authors' contributions :} Both authors contributed equaly for this publication.

\paragraph{\bf Funding: } The research reported in this paper was funded through King Abdullah University of Science and Technology (KAUST) baseline funds and KAUST OSR-CRG2021-4674.

\bibliographystyle{plain}
\bibliography{mfg.bib}

\end{document}